%% file: main.tex
\documentclass[a4paper,11pt]{article}

\include{header.tex}

\begin{document}


\title{The homotopy type of the moment-angle complex associated to the complex of injective words}

\author{
Pedro Conceição
}


\date{}

\maketitle
\begin{abstract}
Topological methods have emerged as valuable tools for analyzing the structural properties of directed graphs, particularly connectome data in computational neuroscience. This paper investigates the construction of topological spaces from combinatorial data of directed graphs using the polyhedral product functor, with particular emphasis on understanding their homotopy type, which is also of independent interest in topology and combinatorics. Specifically, we compute the homotopy type of the moment-angle complex over the face poset of the complex of injective words. This reveals a tight connection between homotopy and combinatorics: its homotopy type is determined by the $h$-vector of complexes of injective words. We also construct an associated homotopy fibration of polyhedral products associated to ordered simplicial complexes, which in a way generalizes the analogous homotopy fibration for polyhedral products over abstract simplicial complexes.
\end{abstract}
\vspace{1cm}

\section*{Introduction}
Ideas from combinatorial topology have recently found applications in computational neuroscience. A successful example is the construction of a topological space from the combinatorial data of an underlying connectome to study its global structure. In this context, a connectome is represented as a directed graph, commonly referred to as a digraph. Topological spaces serve as powerful tools for investigating connectome architecture, as their inherent properties reveal insights into the network's structural organization. In particular, the directed flag complex is a higher dimensional topological space made out of directed cliques of the underlying directed graph, which has recently been used as such a tool and provided valuable insights to functional properties of the connectome \cite{neidigraphs_network,bluebrain_paper}. 

This paper investigates polyhedral products as a  more general construction of topological spaces from combinatorial data of digraphs and directed flag complexes, focusing on understanding their homotopy type. polyhedral products are topological spaces contructed by gluing together other spaces according to a combinatorial recipe determined by a poset $\PP$. Introduced by Bahri, Bendersky, Cohen, and Gitler \cite{BBCG}, the polyhedral product space generalizes two fundamental families of topological spaces in toric topology: Davis-Januszkiewicz spaces, denoted by $DJ_\PP$ or $\ZZ_\PP(\C P^\infty,*)$, and moment-angle complexes, denoted by $\ZZ_\PP$ or $\ZZ_\PP(D^2,S^1)$ \cite{davis-januszkiewicz-seminal}. In particular, polyhedral products are constructed from diagrams of spaces. Connection can be drawn to the work of Ziegler and \v{Z}ivaljevi\'{c} \cite{homo-type-diagram}, developed further by Welker, Ziegler and \v{Z}ivaljevi\'{c} \cite{hocolim-comb}, which are the first combinatorial application of the theory of diagram of spaces.

While polyhedral products were originally defined specifically for cases where $\PP$ is the face poset of a simplicial complex, this concept has been extended to increasingly general poset structures \cite{lue-panov,kishimoto2019polyhedral}. This generalization is essential for our study, as a directed flag complex is generally not a simplicial complex. The simplest example consists of a topological space defined by two vertices reciprocally connected; this a directed flag complex but it is not a simplicial complex. 

Specifically, we study the moment-angle complex associated with the complex of injective words on $[n]:=\{1,…,n\}$ letters, which we denote by $\ZZ_{\Gamma_n}$. The complex of injective words is a well-studied object in mathematics \cite{farmer1978cellular,lex-wachs-bjoern,injective-words-ran-meshulam-wojtek,injective-words-stability} and, in our setting, can be defined as the directed flag complex of the complete $n$-digraph. Directed flag complex and the complex of injective words are examples of what we call ordered simplicial complexes. An ordered simplicial complex on $[n]$ is a collection of finite ordered subsets of $[n]$ that is closed under taking ordered subsets. Its 1-skeleton is a digraph with no loops and no multiple edges in the same direction.

While the cohomology of moment-angle complexes is well understood, their homotopy type remains largely unexplored. Previous work has identified cases where such complexes are homotopy equivalent to wedges of spheres \cite{shifted-GRBIC20136,GRBIC2007357_coordinatespace,shifted-IRIYE2013716}; however, these results have been restricted to moment-angle complexes associated to simplicial complexes. Our work extends this investigation to the ordered simplicial complex setting. 

Our first main results starts from observation that the additive structure of the cohomology of the moment angle complex associated to the complex of injective words $\ZZ_{\Gamma_n}$ can be precisely computed using the combinatorics properties of the complex of injective words. Based on this, we show that the homotopy colimit construction of $\ZZ_{\Gamma_n}$ can be computed precisely, yielding a tight connection to combinatorial proprieties of the complex on injective words.

\begin{alphabeticaltheorem}\label{theomA}
\[
\ZZ_{\Gamma_n} \simeq \bigvee_k h_k S^{2k} \text{ with } n\geq 2 \text{ and }  2 \leq k \leq n,
\]
where $h_k$ is the $k$th element of the $h$-vector of the complex of injective words and $h_k S^{2k}$ denotes a wedge sum of $h_k$ spheres of dimension $2k$.
\end{alphabeticaltheorem}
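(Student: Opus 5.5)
Since $\ZZ_{\Gamma_n}$ is a homotopy colimit over the face poset, I would prove Theorem~\ref{theomA} by splitting it with a wedge lemma and then computing each piece from the topology of the links in $\Gamma_n$. The model is $\ZZ_{\Gamma_n}=\operatorname{colim}_{\sigma\in\Gamma_n}(D^2,S^1)^\sigma$, where an ordered face $\sigma$ with underlying vertex set $|\sigma|\subseteq[n]$ contributes $(D^2)^{|\sigma|}\times(S^1)^{[n]\setminus|\sigma|}$. All faces with the same underlying set contribute the same subspace, and the maps are cofibrations. So I would first replace the colimit by a homotopy colimit (the projective cofibrancy argument for polyhedral products over posets, as in Lü--Panov and Kishimoto--Levi).

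I would then follow the Grbić--Theriault strategy and split after one suspension. The BBCG stable splitting extended to ordered complexes, or the Ziegler--\v{Z}ivaljevi\'c wedge lemma applied to $\Sigma\ZZ$, should give
\[
\Sigma\ZZ_{\Gamma_n}\simeq\Sigma\bigvee_{I\subseteq[n]}\Sigma^{|I|+1}\,\bigl|\Gamma_n|_I\bigr|.
\]
For $I\subseteq[n]$, the full subcomplex $\Gamma_n|_I$ is isomorphic to $\Gamma_{|I|}$. By Farmer and Björner--Wachs, $\Gamma_m$ is shellable of dimension $m-1$, and so $\Gamma_m\simeq\bigvee_{D_m}S^{m-1}$, where $D_m$ is the number of derangements of $[m]$. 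Hence each summand is a wedge of spheres $S^{2|I|}$. Collecting the $\binom{n}{k}$ subsets of size $k$ gives $\binom nk D_k$ copies of $S^{2k}$ after suspension. The case $k=1$ contributes nothing since $D_1=0$, and $k=0$ is trivial. This is exactly why the range is $2\le k\le n$.

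Next I would check the combinatorial identity $h_k(\Gamma_n)=\binom nk D_k$, so that the counts match the $h$-vector in the statement. The $f$-vector is $f_{i-1}=n!/(n-i)!$. Expanding the $h$-polynomial and using inclusion--exclusion for derangements, $D_k=\sum_j(-1)^{k-j}\binom kj j!$, should give the identity directly. Comparing with cohomology computed via the Hochster-type formula is a consistency check.

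The main obstacle is desuspending: showing $\ZZ_{\Gamma_n}$ itself, not only $\Sigma\ZZ_{\Gamma_n}$, is a wedge of spheres. I would try an unstable argument using the homotopy colimit directly, proceeding by induction over faces along the shelling order of $\Gamma_n$. Each new facet is attached along a contractible or spherical intersection, and the shelling condition should make each attaching map null-homotopic, adding one $S^{2k}$ at a time. The ordering is key: faces with equal underlying sets glue identical copies of $(D^2)^{|\sigma|}\times(S^1)^{[n]\setminus|\sigma|}$, and these should collapse via the wedge lemma. Alternatively, I would show $\ZZ_{\Gamma_n}$ is homotopy equivalent to a polyhedral product over an auxiliary shifted (or shellable) simplicial complex, and then invoke Iriye--Kishimoto. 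If that fails, a fallback is the fibration $\ZZ_{\Gamma_n}\to DJ\to BT^n$ together with the fact that $\ZZ_{\Gamma_n}$ is $3$-connected, a co-H-space argument, and a Hilton--Milnor-style decomposition.
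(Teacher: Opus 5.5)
\textbf{The gap is the desuspension step, and it cannot be closed.} For $n\ge 4$ the space $\ZZ_{\Gamma_n}$ is not a wedge of spheres. So none of your three unstable routes can succeed: null attaching maps along the shelling, reduction to a shifted complex plus Iriye--Kishimoto, or a co-H-space argument.

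Here is why. Put $Q=\Gamma_{\{1,2\}}\times\Gamma_{\{3,4\}}$, the face poset of the join $\Gamma_{\{1,2\}}*\Gamma_{\{3,4\}}$. Consider concatenation $s(u,v)=uv$ and restriction $\phi(w)=(w|_{\{1,2\}},w|_{\{3,4\}})$. Both are order-preserving, both preserve underlying vertex sets, and $\phi\circ s=\mathrm{id}_Q$. The image of $s$ is the subcomplex of $\Gamma_4$ spanned by the facets $1234,1243,2134,2143$.

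Since $\ZZ_\sigma$ depends only on $V(\sigma)$, both maps induce maps of homotopy colimits. Moreover $\ZZ_Q\simeq\ZZ_{\Gamma_2}\times\ZZ_{\Gamma_2}\simeq S^4\times S^4$. Hence $S^4\times S^4$ is a retract of $\ZZ_{\Gamma_4}$. The same holds for $\ZZ_{\Gamma_n}$ for every $n\ge 4$: restrict words to $\{1,2\}$ and $\{3,4\}$, and project away (respectively insert base points in) the other coordinates.

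Pull back the two generators $x_1,x_2$ of $H^4(S^4\times S^4)$ along $\ZZ(\phi)$ to get $a,b\in H^4(\ZZ_{\Gamma_n})$. Then $\ZZ(s)^*(a\smile b)=x_1\smile x_2\neq 0$. So $\ZZ_{\Gamma_n}$ has a nontrivial cup product and is not even a co-H-space.

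\textbf{What survives.} Your stable computation holds, and so does the count $h_k=\binom nk d(k)$, which agrees with Proposition~\ref{add-co-inj}. For $n\le 3$ the stable statement even desuspends: $\ZZ_{\Gamma_3}$ is $3$-connected of dimension $6$, so Freudenthal applies.

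\textbf{Comparison with the paper.} The paper's proof follows exactly your first unstable route. Corollary~\ref{coro-wedge} glues the facets of the lexicographic shelling one at a time, attaching $\ZZ_{F_j}\cong D^{2n}$ along $\ZZ_{I_j}\simeq S^{2|I_j|-1}$ (Proposition~\ref{prop-pute-boundary-momentangle}). It then concludes that a summand $S^{2|I_j|}$ is wedged on, because the map $\ZZ_{I_j}\to D^{2n}$ is null-homotopic.

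That is the wrong leg of the pushout. The homotopy pushout of $\ZZ_{W_j}\leftarrow S^{2|I_j|-1}\to D^{2n}$ is the cofibre of $S^{2|I_j|-1}\to\ZZ_{W_j}$. Only nullity of this map would give the wedge. The retraction above shows that for $n=4$ at least one of these attaching maps is essential. The purely additive count of Proposition~\ref{add-co-inj} cannot detect this.

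So you located the crux correctly, but neither your sketch nor the paper's argument bridges it. What is provable along your lines is $\Sigma\ZZ_{\Gamma_n}\simeq\Sigma\bigvee_k h_kS^{2k}$. The unsuspended statement survives only for $n\le 3$.
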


Our second result starts from the observation that any ordered simplicial complex injects into the
complex of injective words on the same vertex set. From this observation and Theorem \ref{theomA}, we obtain the following. 

\begin{alphabeticaltheorem}\label{theomB}
There is a homotopy fibration given by 
\[
\ZZ_\PP(\Omega(\bigvee h_k S^{2k})\times D^2, \Omega(\bigvee h_k S^{2k})\times S^1 )	\to \ZZ_\PP(\C P^\infty,*)\to \ZZ_{\Gamma_n}(\C P^\infty,*),
\]
where $\PP$ and $\Gamma_n$ are, respectively, the face poset of an ordered simplicial complex and of the complex of injective words both on $n$ vertices, and $h_k$ is the the $k$th element of the $h$-vector of the complex of injective words.
\end{alphabeticaltheorem}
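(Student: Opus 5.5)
The plan is to mimic the classical Denham--Suciu argument for polyhedral products over simplicial complexes, adapted to posets. First I will show that an injection of ordered simplicial complexes $K\hookrightarrow\Gamma_n$ on the same vertex set $[n]$ induces a map of polyhedral products $\ZZ_\PP(\C P^\infty,*)\to\ZZ_{\Gamma_n}(\C P^\infty,*)$. Here $\PP\subseteq\Gamma_n$ is a sub-poset closed downward, so the diagram $D_\PP$ is the restriction of $D_{\Gamma_n}$, and we get an induced map of homotopy colimits. Both spaces map to $(\C P^\infty)^n$ through the natural maps $\ZZ_Q(\C P^\infty,*)\to\ZZ_Q(\C P^\infty,\C P^\infty)$. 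I must check that these maps are compatible: in the ordered setting, faces with the same underlying set but different orders are identified after the pair $(\C P^\infty,\C P^\infty)$ is substituted, since the colimit is over a poset and the maps to the product factor through the underlying-set coordinates.

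The central input is the fibration for general pairs: for any finite poset $Q$ of this type,
\[
\ZZ_Q(E,A)\to \ZZ_Q(\C P^\infty,*)\to (\C P^\infty)^n ,
\]
where $E\to\C P^\infty$ is the path fibration, so $(E,A)\simeq(D^2,S^1)$ with $A\simeq S^1=\Omega\C P^\infty$. I will prove this diagramwise by pulling back the path fibration along each piece of the homotopy colimit. I will then use the fact that homotopy colimits over a poset commute with homotopy pullbacks over a fixed base (Puppe's theorem / Mather cube), which is valid because $(\C P^\infty)^n$ is connected. Applied to $Q=\Gamma_n$, Theorem~\ref{theomA} identifies the fibre $\ZZ_{\Gamma_n}$ with $W:=\bigvee h_kS^{2k}$.

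Next I form the square with $\ZZ_\PP(\C P^\infty,*)\to\ZZ_{\Gamma_n}(\C P^\infty,*)\to(\C P^\infty)^n$. Let $F$ be the homotopy fibre of the first map. Comparing the two fibrations over $(\C P^\infty)^n$ gives a fibration $F\to\ZZ_\PP(D^2,S^1)\to\ZZ_{\Gamma_n}\simeq W$, so $F$ is the homotopy fibre of $\ZZ_\PP\to W$. To identify $F$ with the polyhedral product, I apply the same pullback principle once more. I pull the path fibration $\Omega W\to PW\to W$ back along the map $\ZZ_\PP(D^2,S^1)\to W$, and decompose the source as the homotopy colimit over $\PP$ of the pieces $D(\sigma)=\prod_{i\in\sigma}D^2\times\prod_{i\notin\sigma}S^1$.

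Each piece $D(\sigma)$, for $\sigma$ a face, lies in the restriction of $\ZZ_{\Gamma_n}$ to the full-simplex cells. The main obstacle is showing that the composite map $D(\sigma)\to W$ is null-homotopic compatibly over the whole diagram. If that holds, the pullback over $D(\sigma)$ is $\Omega W\times D(\sigma)$, and the homotopy colimit becomes the polyhedral product of the pairs $(\Omega W\times D^2,\Omega W\times S^1)$, interpreting the $\Omega W$ factors coordinatewise as in the statement. For this step I would first try a strict model in which the pieces $D(\sigma)$ for $|\sigma|\le1$ are contractible or circles, so the map to $W$ is null for connectivity reasons, since $W$ is $3$-connected. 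I would then extend the nullhomotopies inductively over the poset using obstruction theory, again relying on the high connectivity of $W$ relative to the dimensions of the cells involved. If that fails, the fallback is to state the fibre simply as the homotopy colimit of the pulled-back diagram and identify it termwise.
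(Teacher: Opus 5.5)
Your reduction to the homotopy fibre of $\ZZ_\PP(D^2,S^1)\to\ZZ_{\Gamma_n}(D^2,S^1)\simeq W$ is the same as the paper's. The gap is the step you yourself flag as the main obstacle, and it cannot be filled by obstruction theory or any other method. A compatible family of null-homotopies of the maps $D(\sigma)\to W$ over $\PP$ would make the induced map $\ZZ_\PP(D^2,S^1)=\hocolim_\PP D(\sigma)\to W$ null-homotopic. Now take $\PP=\Gamma_n$, which is a legitimate ordered simplicial complex on $[n]$. There the map is the identity of $\ZZ_{\Gamma_n}(D^2,S^1)$, and this space is not contractible: already $H^4\neq 0$, from the summand $a=\{1,2\}$ of the L\"u--Panov formula, where $|(\Gamma_n)_a|$ is a circle.

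Your connectivity heuristics are also off. Since $D(\sigma)\simeq T^{n-|\sigma|}$, the pieces with $|\sigma|\le 1$ are, up to homotopy, the tori $T^n$ and $T^{n-1}$, not points or circles. Moreover $\ZZ_\PP$ has dimension up to $2n$ while $W$ is only $3$-connected, so nothing forces the obstructions (with coefficients in $\pi_j(W)$, $4\le j\le 2n$) to vanish. Each single map $D(\sigma)\to W$ is indeed null, because it factors through the contractible piece $(D^2)^n$ of any facet $\tau\ge\sigma$ of $\Gamma_n$. But different facets give incompatible null-homotopies, and that incompatibility is precisely what builds $W$.

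Even granting compatible trivialisations, you would only get $\hocolim_\PP(\Omega W\times D(\sigma))\cong\Omega W\times\ZZ_\PP(D^2,S^1)$, with a single factor $\Omega W$. By definition each piece of $\ZZ_\PP(\Omega W\times D^2,\Omega W\times S^1)$ is $(\Omega W)^n\times D(\sigma)$, so that polyhedral product is $(\Omega W)^n\times\ZZ_\PP(D^2,S^1)$. Interpreting the factors ``coordinatewise'' is therefore not a harmless rewording.

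In fact the statement is false. For $\PP=\Gamma_n$ the map $\ZZ_\PP(\C P^\infty,*)\to\ZZ_{\Gamma_n}(\C P^\infty,*)$ is the identity, with contractible fibre, whereas neither $(\Omega W)^n\times W$ nor $\Omega W\times W$ is contractible. For $\PP$ a single ordered copy of $\Delta^{n-1}$, the paper itself computes the fibre to be $\Omega W$, while the literal statement predicts $(\Omega W\times D^2)^n\simeq(\Omega W)^n$. For $n=2$ these differ: $H_3(\Omega S^4)\cong\Z$ but $H_3(\Omega S^4\times\Omega S^4)\cong\Z^2$.

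The paper's proof breaks at the same place. It assembles the coordinatewise fibrations $\Omega W\times S^1\to S^1\to W$ and $\Omega W\times D^2\to D^2\to W$ into a fibration $\ZZ_\sigma(\Omega W\times D^2,\Omega W\times S^1)\to\ZZ_\sigma(D^2,S^1)\to W$, but a product of $n$ fibrations lies over $W^n$, not $W$. It then feeds constant maps into Puppe's theorem, whose homotopy colimit is the constant map $\ZZ_\PP(D^2,S^1)\to W$ rather than the map induced by $\PP\to\Gamma_n$. The only valid conclusion of your outline is your fallback, $F\simeq\hocolim_{\sigma\in\PP}\mathrm{hofib}(D(\sigma)\to W)$. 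Each term there is $\simeq\Omega W\times D(\sigma)$, but the structure maps are twisted by the incompatible null-homotopies.
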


Theorem \ref{theomB} generalizes the well-known homotopy fibration of polyhedral products, which, like the ordered case, relies on the fact that any simplicial complex injects into the simplex on the same vertex set.
\[
\ZZ_{K}(D^2,S^1) \to \ZZ_K(\C P^\infty, *) \to \ZZ_{\Delta^{(n-1)}}(\C P^\infty, *) = \prod_{i = 1} ^n\C P^\infty,
\]
where $K$ is a simplicial complex on $n$ vertices and $\Delta^{(n-1)}$ is the $n-1$-simplex.

\paragraph{Organization.} We organize this paper as follows. Section \ref{sec-preliminaries} contains  preliminary material, establishes our notations and recall some known results. Section \ref{sec-homoto-type} is dedicated to the proof of Theorem \ref{theomA} (as Theorem \ref{homo-type-inj-moment-angle}). Finally, we prove Theorem \ref{theomB} (as Theorem \ref{mark-suggestion-new-fibration}) in Section \ref{sec-fibration}.

\section{The main objects and their preliminaries}\label{sec-preliminaries}

\paragraph{Ordered simplicial complexes and simplicial posets.} An \emph{ordered simplicial complex} $K$ on $[n]$ is a collection of finite ordered subsets of $[n]$ (also called simplices) that is closed under taking ordered subsets. We always assume that $\emptyset$ belongs to $K$. Our motivation for studying ordered simplicial complexes stems from their construction from digraphs (without loops or multiple edges in the same direction), which can model biological connectomes. A key example is the \emph{directed flag complex}, an ordered simplicial complex constructed from a digraph 
$G$ on $[n]$ vertices. The vertices and {$1$-skeleton} of the directed flag complex are determined by $G$, and there is a $k$-cell for each $(k+1)$-directed clique in $G$. A $(k+1)$-directed clique  is an ordered set of vertices $(v_0, \ldots, v_k)$ such that there is an edge from $v_i$ to $v_j$ in $G$ whenever $0 \leq i < j \leq k$. 

A central object of our study is the \emph{complex of injective words}, a directed flag complex of the complete digraph whose simplices are the injective words viewed as directed cliques in the underlying digraph. Recall that the complete $n$-digraph is a directed graph on $[n]$ vertices in which every pair of distinct vertices is connected by a pair of unique edges (one in each direction). 

\begin{definition} 
    The \textbf{complex of injective words} $\inj[n]$ is a directed flag complex whose $1$-skeleton is the complete $n$-digraph on $[n]$ vertices. Its $k$-simplices are indexed by $(k+1)$-ordered tuples $(v_0, \ldots, v_k)$ of distinct vertices in $[n]$, and its face inclusions correspond to the containment reltation: $(v_{i_1}, \ldots, v_{i_l})$ is a face of $(v_0, \ldots, v_k)$ when $\{v_{i_1}, \ldots, v_{i_l}\} \subseteq \{v_0, \ldots, v_k\}$ preserving the induced order.
\end{definition}

The face poset of an ordered simplicial complex $K$ is the poset $\PP$ whose set of elements are the simplices of $K$ and whose partial order relation is the inclusion relation on the set of simplices, that is, $\sigma \subseteq \tau \in K$ implies $\sigma \leq \tau \in \PP$. Let $\Gamma_n$ denote the face poset of $\inj[n]$. We shall often write $\PP$ for both the complex and its face poset. In particular, we shall often write $\Gamma_n$ for the complex of injective words and its face poset. Face posets of ordered simplicial complexes are examples of the so-called \emph{simplicial posets}. 

\begin{definition}
A poset $\PP$ with order relation $\leq$ is called \textbf{simplicial} if it has an initial element $\emptyset$ and for each $\sigma \in \PP$ the lower segment 
\[
\PP_{\leq \sigma} := \{\tau \in \PP: \emptyset \leq \tau \leq \sigma\}
\]
is the face poset of a simplex.
\end{definition}

Let $V_\PP$ denote the vertex set of $\PP$, consisting  of elements $v \in \PP$ such that $p \leq v$ if and only if $p = \emptyset \in \PP$. When $|V_\PP| =n $, we say that $\PP$ is based on $[n]$ vertices. For each $\sigma \in \PP$, let $V_\PP(\sigma)$ be the set of vertices of $\sigma$, that is,  $v \in V_\PP$ such that $v \leq \sigma$. Note that if $\PP$ is a simplicial poset and $\sigma \leq \tau \in \PP$ with $V_\PP(\sigma) = V_\PP(\tau)$, then $\sigma=\tau$. For $a \subseteq [n]$, let $\PP_a$ denote the subposet of $\PP$, consisting of elements $\sigma \in \PP$ with $V(\sigma) \subseteq a$.

Simplicial posets generalize naturally abstract simplicial complexes as face posets of the so called \emph{simplicial cell complexes}, also known as semi-simplicial complexes or $\Delta$-complexes. We adopt the terminology of \cite[Chapter~2.8]{buchstaber2014toric} and refer to them as simplicial cell complexes. 

Briefly, a simplicial cell complex is almost a simplicial complex, but has more flexible rules for attaching simplices along common faces - for example, simplices can share the same vertex set. In particular, an ordered simplicial complex is an example of a simplicial cell complex. The simplest example of a simplicial poset that is not a face poset of a simplicial complex is $\Gamma_2$, that is:
\[\begin{tikzcd}
    12 && 21 \\
    1 && 2 && \\
    & \emptyset
    \arrow[from=2-1, to=1-1]
    \arrow[from=2-1, to=1-3]
    \arrow[from=2-3, to=1-1]
    \arrow[from=2-3, to=1-3]
    \arrow[from=3-2, to=2-1]
    \arrow[from=3-2, to=2-3]
\end{tikzcd}\] 
\quad the face poset of  
\begin{tikzcd}\bullet_1 \arrow[r, shift left, bend left=10, no head, "12"] & \bullet_2 \arrow[l, shift left, bend left=10, no head, "21"]\end{tikzcd}.

We restrict our attention to finite simplicial posets; unless stated otherwise, $\PP$ denotes either such a poset or its corresponding simplicial cell complex, with the distinction clear from context. Although polyhedral products have been recently generalized to more general posets \cite{kishimoto2019polyhedral} - which include simplicial posets as a special case \cite[Example~4.2]{kishimoto2019polyhedral} - we work exclusively with finite simplicial posets for our purposes. While some of our results may be extended to more general settings, doing so would introduce unnecessary notational complexity without substantial benefit to this paper. 

\paragraph{Face vectors of a simplicial cell complex.} One can associate vectors to simplicial cell complexes vectors that encode the number of faces of different dimensions, namely the $f$-vector and $h$-vector. 
\begin{definition}
\label{dfn-f-h-vector}
	Let $\PP$ be a simplicial cell complex of dimension $n$. 
	\begin{itemize}
		\item The $f$-vector is given by $\textbf{f}(\PP) = (f_0, \ldots, f_{n-1})$, 
		where $f_i$ is  the number of $i$-dimensional faces of $\PP$. 
		
		\item The $h$-vector $\textbf{h}(\PP) = (h_0, \ldots, h_{n})$ is defined by the identity 
		\begin{align*}
			h_k &= \sum_{i=0}^{k} (-1)^{k-i} {n-i \choose k-i} f_{i-1} 
		\end{align*}
		with  $f_{-1} := 1$ and $0 \leq k \leq n$.
	\end{itemize}
\end{definition}

The $f$- and $h$-vectors are well studied combinatorial invariants. Stanley provided a complete characterization of $f$-vectors for simplicial posets \cite{STANLEY-fhvectors}, and they satisfy interesting symmetric relations, like the Dehn-Sommerville Relations \cite[Chapter~1.3]{buchstaber2014toric}.
    
\paragraph{Properties of the complex of injective words.} We now recall important properties of the complex of injective words. 

\begin{definition}
 	A simplicial cell complex is pure if its facets (maximal dimension faces) have the same dimension.
\end{definition}

\begin{definition} A pure simplicial cell complex $K$ of dimension $d$ is \textbf{shellable} if one can linearly order its facets $F_1, \ldots, F_k$ so that for all $j \geq 2$, $F_j \cap (F_1 \cup F_2 \cup \ldots \cup F_{j-1})$ is a pure $(d-1)$-dimensional subcomplex of $\partial F_j$, the boundary of $F_j$. Such an order is called shelling order.
 \end{definition} 

Although shellability can be defined for more general complexes \cite{bj-wa-non-pure-shell}, the above definition suffices for our purposes. One way to think of shelling order is the following: the simplicial cell complex is built attaching facet by facet in a particular sequence where $F_j$ is attached to $(F_1 \cup F_2 \cup \ldots \cup F_{j-1})$ over $F_j \cap (F_1 \cup F_2 \cup \ldots \cup F_{j-1})$. The combinatorial condition of being shellable has strong topological consequences. 
 
\begin{theorem}\cite[Proposition~4.3]{bj-shell} 
  Let $\Delta$ be a pure simplicial cell complex of dimension $d+1$. If $\Delta$ is shellable, then it is homotopy equivalent to a wegde sum of $d$-spheres and the number of $d$-spheres is indexed by the $d$-dimensional facets that are attached over the entire boundary.
\end{theorem}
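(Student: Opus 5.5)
We prove the cited statement (Björner's Proposition 4.3) by induction along the shelling order $F_1,\ldots,F_k$, tracking the homotopy type of the partial unions $\Delta_j := F_1\cup\cdots\cup F_j$. Write $\dim F_j = d+1$ for the common facet dimension; ``attached over the entire boundary'' means $F_j\cap\Delta_{j-1}=\partial F_j$, which is a $d$-sphere. The base case is $\Delta_1=F_1$, a closed simplex, hence contractible. The inductive claim is that $\Delta_j$ is homotopy equivalent to a wedge of $d$-spheres, one for each $i\le j$ with $F_i\cap\Delta_{i-1}=\partial F_i$.

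For the inductive step we analyse $A_j := F_j\cap\Delta_{j-1}$. By the shelling condition, $A_j$ is a pure $d$-dimensional subcomplex of $\partial F_j$. The key observation is that $\partial F_j$ is the boundary of a simplex, even in the simplicial cell (semi-simplicial) setting. This holds because the characteristic map of $F_j$ is a map from a simplex whose lower segment $\PP_{\le F_j}$ is the face poset of a simplex. Hence $A_j$ is a union of some nonempty set of codimension-one faces of the simplex $F_j$, taken inside its own face lattice. We will show that any union $U$ of $m$ facets of $\partial\Delta^{d+1}$ is contractible when $1\le m<d+2$, and is the whole sphere $\partial\Delta^{d+1}\cong S^d$ when $m=d+2$. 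For contractibility, pick a facet $G$ of the simplex not in $U$, and let $v$ be the vertex opposite $G$. Every facet in $U$ contains $v$, so $U$ is a cone with apex $v$.

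It then follows that $\Delta_j=\Delta_{j-1}\cup_{A_j}F_j$ is obtained by gluing a contractible space along a subcomplex. The pair $(F_j,A_j)$ is a CW pair, so this is a homotopy pushout. If $A_j$ is contractible, collapsing $F_j$ gives $\Delta_j\simeq\Delta_{j-1}$, by the standard fact that attaching a contractible complex along a contractible subcomplex preserves homotopy type. If instead $A_j=\partial F_j$, then $\Delta_j\simeq\Delta_{j-1}\cup_{S^d}D^{d+1}$. The attaching map $S^d\to\Delta_{j-1}\simeq\bigvee S^d$ must then be shown to be null-homotopic, which gives $\Delta_j\simeq\Delta_{j-1}\vee S^{d+1}$.

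This last point is the obstacle, and it reveals a dimension mismatch in the statement as written. When a full boundary $\partial F_j\cong S^d$ of a $(d+1)$-cell is glued on, the sphere that appears has dimension $d+1$, not $d$. So we will prove the statement in the form ``a shellable pure complex of dimension $d$ is a wedge of $d$-spheres, one for each facet attached over its entire boundary.'' This corrected form is what the indexing suggests, and it is what is used for $\Gamma_n$, of dimension $n-1$.

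With the dimension fixed, nullity of the attaching map $\partial F_j\to\Delta_{j-1}$ comes from a contraction of the space into which $\partial F_j$ maps. Since $A_j$ is the full $(d-1)$-skeleton-dimensional boundary, we use the fact that $\Delta_{j-1}$ is a wedge of $d$-spheres in the corrected indexing. Alternatively, and more cleanly, we reorder the shelling: by a standard rearrangement, facets with $A_j=\partial F_j$ may be moved to the end of the order, preserving the shelling property. Then $\Delta_{j_0}$, the union before any such facet, is contractible. Each remaining facet attaches a $d$-cell along its boundary to a space obtained from a contractible one by adding $d$-cells, and such a space is $(d-1)$-connected, hence a wedge of $d$-spheres. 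Finally, the number of spheres equals the number of full-boundary attachments, as required.
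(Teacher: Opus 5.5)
The paper gives no proof of this statement; it only cites Bj\"orner. Your shelling induction is the standard argument behind that citation, and it is correct.

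The case split is right:
\begin{itemize}
\item If $F_j\cap\Delta_{j-1}$ is a proper nonempty union of boundary facets of the embedded simplex $F_j$, it is a cone on the vertex opposite a missing facet. It is therefore contractible, and $\Delta_j\simeq\Delta_{j-1}$.
\item Otherwise $F_j\cap\Delta_{j-1}=\partial F_j$, and a top cell is attached along its whole boundary.
\end{itemize}

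Your dimension correction is also right and necessary. As printed the statement is false: $\partial\Delta^{d+2}$ is a shellable pure $(d+1)$-dimensional complex homeomorphic to $S^{d+1}$, not a wedge of $d$-spheres. The version you prove is the one the paper actually uses in Theorem~\ref{theo-shell-inj}, where $\inj[n]$ has dimension $n-1$. That version reads: a pure $d$-dimensional shellable complex is a wedge of $d$-spheres, one for each facet with $F_j\cap\Delta_{j-1}=\partial F_j$.

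One justification needs repair. The null-homotopy of the attaching map $\partial F_j\cong S^{d-1}\to\Delta_{j-1}$ does not come from a contraction of $\Delta_{j-1}$, which is not contractible in general. It comes from the inductive hypothesis that $\Delta_{j-1}\simeq\bigvee S^d$ is $(d-1)$-connected. Proposition~\ref{hep-conse} then gives $\Delta_j\simeq\Delta_{j-1}\vee S^d$.

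With that fix the direct induction is complete. The rearrangement alternative is unnecessary, and using it would also require checking the Bj\"orner--Wachs rearrangement lemma for simplicial posets.
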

\begin{theorem}\cite[Theorem~6.1]{lex-wachs-bjoern}\label{theo-shell-inj}
 The complex of injective words $\inj[n]$ is shellable via the lexicographic order $\leq_l$ of the symmetric group $\Sigma_n$. Moreover, $\inj[n] \simeq \bigvee_{d(n)} S^{n-1}$, where $d(n)$ is the number of derangements in $\Sigma_n$, that is, the number of fixed point free permutations in $\Sigma_n$. 
\end{theorem}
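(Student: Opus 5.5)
The plan is to check the shelling condition directly on the lexicographic order of the facets, and then to count the spheres with an Euler characteristic computation rather than a bijection. The facets of $\inj[n]$ are the injective words of length $n$, i.e. permutations $w=w_1\cdots w_n\in\Sigma_n$, each an $(n-1)$-simplex, so $\inj[n]$ is pure of dimension $n-1$. A face of $w$ is a subword obtained by deleting a set $S$ of positions. Such a subword $u$ lies in an earlier facet $w'<_l w$ exactly when the missing letters can be reinserted into $u$ to give a lexicographically smaller permutation.

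\textbf{Codimension-one faces.} First determine which faces $w\setminus w_i$ are ``old'', meaning they lie in an earlier facet. Reinserting $w_i$ at a later position puts $w_{i+1}$ in position $i$, which gives a smaller word iff $w_{i+1}<w_i$. Reinserting it at an earlier position $j<i$ compares $w_i$ with $w_j$ at the first difference, which gives a smaller word iff $w_i<w_j$. Hence $w\setminus w_i$ is old iff either $w_{i+1}<w_i$, or some $j<i$ has $w_j>w_i$. Call such positions \emph{removable}.

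\textbf{Purity of the intersection.} This is the main step. Take any face $u=w\setminus S$ lying in some earlier facet $w'$, and let $p$ be the first position where $w'$ and $w$ differ, so $w'_p<w_p$. The letter $w_p$ must be moved in $w'$, so it belongs to $S$ or interacts with a deleted letter. I would argue that some $i\in S$ is already removable in $w$. Concretely, let $w'_p=w_q$ with $q>p$. Either the position holding $w_p$ or some position strictly between $p$ and $q$ lies in $S$, since otherwise $u$ could not accommodate the reordering. A descent or inversion witnessing removability must then occur at a deleted position; I would choose the leftmost deleted position in $[p,q]$ and check the two cases above. With such an $i$, $u\subseteq w\setminus w_i$, which is an old codimension-one face. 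Therefore $F_j\cap(F_1\cup\cdots\cup F_{j-1})$ is the union of the codimension-one faces $w\setminus w_i$ over the removable $i$, which is a pure $(n-2)$-dimensional subcomplex of $\partial F_j$. For $w\neq\mathrm{id}$ this union is nonempty: take $i$ to be the first descent of $w$, so $w_i>w_{i+1}$ and $i$ is removable. This gives the shelling.

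\textbf{Counting the spheres.} By \cite[Proposition~4.3]{bj-shell}, $\inj[n]$ is then a wedge of $(n-1)$-spheres, one for each facet attached along its whole boundary. The number of such facets equals $|\tilde\chi(\inj[n])|$. Since $f_{k-1}=n!/(n-k)!$, we get
\[
\tilde\chi(\inj[n])=\sum_{k=0}^{n}(-1)^{k-1}\frac{n!}{(n-k)!}=(-1)^{n-1}\,n!\sum_{m=0}^{n}\frac{(-1)^m}{m!}=(-1)^{n-1}d(n),
\]
using the substitution $m=n-k$ and the inclusion–exclusion formula for derangements. This gives $\inj[n]\simeq\bigvee_{d(n)}S^{n-1}$.

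The hard part is the purity argument for higher-codimension faces, where one must locate a removable deleted letter. If a direct argument is awkward, I would fall back on the following reformulation. Compute the lexicographically smallest facet containing $u$ by greedy insertion of the missing letters. Then $u$ is old iff this greedy facet is not $w$, and the first deleted letter that the greedy insertion moves is removable.
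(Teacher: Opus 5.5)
\textbf{The gap is in the purity step.} The paper does not prove this statement; it only cites Bj\"orner--Wachs. Your direct route is therefore genuinely different, and its overall structure is sound. Your characterization of the old codimension-one faces is correct: position $i$ is removable iff $w_i>w_{i+1}$ or $w_i$ is not a left-to-right maximum. Counting spheres via $\tilde\chi(\inj[n])=(-1)^{n-1}d(n)$ is also correct, and it is a clean substitute for identifying which facets are attached along their whole boundary.

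The problem is in the step you flag as the main one: the witnesses you propose for a removable deleted position do not work.
\begin{itemize}
\item It is not true that $p$, or some position strictly between $p$ and $q$, lies in $S$. Take $w=21$, $S=\{2\}$, $w'=12$. Since $w_p$ and $w_q$ occur in opposite orders in $w$ and $w'$, all that is forced is $p\in S$ or $q\in S$.
\item More seriously, the leftmost deleted position in $[p,q]$ need not be removable. Take $w=231$, $S=\{1,3\}$ (so $u=3$) and $w'=123$. Then $p=1$, $q=3$, and position $1$ is not removable: $w_1=2$ is a left-to-right maximum followed by an ascent, and indeed $31$ lies in no facet before $231$. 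Position $3$ is the removable one.
\item Your greedy fallback also fails. Take $w=231$, $S=\{1,2\}$. The greedy facet is $123$, and it puts the \emph{kept} letter $1$ at the first differing position. The first deleted letter it moves is $2$, which again sits in the non-removable position $1$. The removable position is $2$, a descent inside the block of deleted positions.
\end{itemize}

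\textbf{The claim is true and the repair is short.}
\begin{itemize}
\item If $q\in S$, take $i=q$. It is removable because $p<q$ and $w_p>w_q$.
\item Otherwise $p\in S$. Let $p,p+1,\dots,r$ be the maximal run of consecutive deleted positions starting at $p$, so $r<q$ and $r+1\notin S$. Suppose none of $p,\dots,r$ were removable. Then each would be an ascent, giving $w_{r+1}>w_p>w_q$, and hence $r+1<q$. So the kept letter $w_{r+1}$ precedes the kept letter $w_q$ in $w$, and hence in $u$. But in $w'$ the letter $w_q=w'_p$ precedes $w_{r+1}$, since $w_{r+1}$ is not among $w'_1,\dots,w'_p$. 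This contradicts $u$ being a subword of $w'$.
\end{itemize}
Alternatively, prove the contrapositive. If no deleted position is removable, the deleted letters are left-to-right maxima, each followed by an ascent. One then checks that $w$ is exactly the greedy merge of $u$ with the increasingly sorted missing letters, i.e.\ the lexicographically smallest facet containing $u$. With this step supplied, your shelling argument and sphere count go through.
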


The number of derangements $d(n)$ is given by the formula $\sum_{i=0}^n(-1)^i\frac{n!}{i!}$, which can be derived using an inclusion-exclusion principle argument. A proof can be found in \cite[Example~2.2.1]{enumerative-book-stanley}. For example, the shelling order of the facets of $\inj[3]$ induced by the lexicographic order $\leq_l$ of the symmetric group $\Sigma_n$ is given by  
\[
	F_1 := 123 \leq F_2 := 132 \leq F_3 := 213 \leq F_4 := 231 \leq F_5 := 312 \leq F_6 := 321,
\]
and $d(3) =2 $. Gluing the simplices along common faces following the order above we get  $\inj[3] \simeq S^2 \vee S^2$. 

\paragraph{Polyhedral products.}
A \emph{diagram of spaces} over a finite poset $\PP$ is a covariant functor $\DD: \PP \to \Top$ from $\PP$ into the category of compact generated topological spaces. Here we consider $\PP$ as a small category with a unique arrow $\sigma \to \tau$ if $\sigma \leq \tau$. This means that for each $\sigma \in \PP$ we associate a topological space $\DD_\sigma$ and to any pair $\sigma \leq \tau$ in $\DD$ we associate a continuous map $d_{\sigma\tau}: \DD_\sigma \to \DD_\tau$ such that $d_{\sigma\sigma} = \id_{\DD_\sigma}$ and $d_{\sigma\mu} = d_{\tau\mu}\circ d_{\sigma\tau}$ for $\sigma\leq\tau\leq\mu$. We call $\DD$ a $\PP$-diagram of spaces.

We shall always consider the pairs $(X_i,A_i)$ to be well-pointed pairs of $CW$-spaces with $A_i \subset X_i$ and base point $x_i \in A_i$ for every $i$ in the vertex set of a simplicial poset $\PP$, unless stated explicitly otherwise. Note that since we work with $CW$-spaces, the pairs $(X_i,A_i)$ are $NDR$-pairs \cite[Proposition~A4]{Hatcher}. Moreover, the inclusion of a base point is a closed cofibration \cite[Corollary~1.3.7]{fritsch_piccinini_1990}.
Therefore, any pointed $CW$-space is well-pointed \cite[Definiton~1.1.5]{cubical-homotopy}.  

\begin{definition}\label{def-poly-space}
	Let $\PP$ be a finite simplicial poset and $\textbf{(X,A)} := \{(X_i,A_i) | i \in V_\PP\}$ a collection of pairs of pointed $CW$-spaces with $A_i \subset X_i$ and base point $x_i \in A_i$ for all $i \in V_\PP$.  The polyhedral product functor determined by $\PP$ and $\textbf{(X,A)}$ is defined as the $\PP$-diagram
		\begin{align*}
			\ZZ: & \PP \to \Top \\
			& \sigma \mapsto \ZZ_\sigma\textbf{(X,A)}
		\end{align*}
		where $\ZZ_\sigma\textbf{(X,A)} := \prod_{i \in V_\PP(\sigma)}X_i \times \prod_{i \not\in V_\PP(\sigma)}A_i$. Whenever $\tau \leq \sigma$ , $\ZZ(\tau \to \sigma)$ maps it to an inclusion $\ZZ_\tau\textbf{(X,A)} \subseteq \ZZ_\sigma\textbf{(X,A)}$.		
		The polyhedral product $\ZZ_{\PP}\textbf{(X,A)}$ is defined as 
		\[
		\ZZ_{\PP}\textbf{(X,A)} := \hocolim_{\sigma \in \PP} \ZZ_\sigma\textbf{(X,A)} 
		\]
	
	The definition is functorial with respect to inclusion maps of $\PP$ and maps between pair of spaces.
\end{definition}

We shall often write $\ZZ_\sigma\textbf{(X,A)}$ as $\ZZ_\sigma$ when $\textbf{(X,A)}$ is clear from context. When $\textbf{(X,A)} := \{(\C P^\infty,*) | i \in V_\PP\}$, the resulting space is called the Davis-Januszkiewicz space, denoted by $DJ_\PP$ or $\ZZ_\PP(\C P^\infty,*)$. Now when $\textbf{(X,A)} := \{(D^2,S^1) | i \in V_\PP\}$, the polyhedral product space is called the moment-angle complex, denoted by $\ZZ_\PP(D^2,S^1)$ or simply by $\ZZ_\PP$.

\paragraph{Colimits and homotopy colimits.} Polyhedral products were first defined as colimits \cite{BBCG,lue-panov}, whereas we defined them as homotopy colimits, following \cite{kishimoto2019polyhedral}. This replacement is useful, as the homotopy colimits have more useful homotopy properties than colimits. Moreover, homotopy colimits have proven to be powerful tools for studying combinatorial problems \cite{homo-type-diagram,hocolim-comb,poset-fiber-thm}. Of course colimits and homotopy colimits are not the same in general, but do coincide up to homotopy in our case as proved in \cite{kishimoto2019polyhedral}. We briefly recall these results for the sake of completeness. We start with the so called Projection Lemma. 

\begin{lemma}\label{proj-lemma}\cite[Proposition~3.1]{hocolim-comb} Let $\DD$ be a $\PP$-diagram of spaces such that $d_{\sigma\tau}$ is a closed cofibration for all $\sigma \leq \tau$ in $\PP$. Then 
\[
\hocolim_\PP \DD \simeq \colim_\PP \DD.
\]
\end{lemma}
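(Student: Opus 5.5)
The plan is to prove the statement by comparing the homotopy colimit and the colimit through an intermediate space, the mapping-cylinder replacement of $\DD$. I would first realize $\hocolim_\PP \DD$ as the Bousfield--Kan type construction, i.e.\ the quotient of $\coprod_{\sigma_0<\cdots<\sigma_k} \DD_{\sigma_0}\times\Delta^{k}$ over chains in $\PP$, equivalently as $\colim_\PP \widetilde{\DD}$. Here $\widetilde{\DD}_\tau := \hocolim_{\PP_{\leq \tau}} \DD|_{\PP_{\leq\tau}}$ is a ``cofibrant replacement'' of $\DD$. The projection $\Delta(\PP_{\leq\tau})\times\DD_\tau \supseteq \widetilde{\DD}_\tau \to \DD_\tau$ collapses the simplex coordinates and is natural in $\tau$. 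It induces the projection map
\[
p\colon \hocolim_\PP \DD \to \colim_\PP \DD,
\]
and the goal is to show $p$ is a homotopy equivalence.

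I would argue by induction on the number of elements of the finite poset $\PP$. Choose a maximal element $\tau\in\PP$ and set $\PP' = \PP\setminus\{\tau\}$. Then $\colim_\PP \DD$ is the pushout of
\[
\colim_{\PP'_{<\tau}}\DD \to \DD_\tau \quad\text{and}\quad \colim_{\PP'_{<\tau}}\DD \to \colim_{\PP'}\DD .
\]
Similarly, $\hocolim_\PP\DD$ is the pushout of
\[
\hocolim_{\PP_{<\tau}}\DD \to \hocolim_{\PP_{\le\tau}}\DD\simeq \DD_\tau \quad\text{and}\quad \hocolim_{\PP_{<\tau}}\DD \to \hocolim_{\PP'}\DD .
\]
The latter is a cofibration because it is the inclusion of a subcomplex of the simplicial replacement. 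The projection $p$ gives a map between these two pushout diagrams. By induction the maps on the $\PP'$ and $\PP_{<\tau}$ corners are homotopy equivalences. On the $\PP_{\le\tau}$ corner the map is an equivalence because $\tau$ is terminal there, so the homotopy colimit deformation retracts onto $\DD_\tau$.

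To conclude with the gluing lemma for homotopy pushouts, one leg in the colimit square must be a closed cofibration. This is where the hypothesis enters. I must show $\colim_{\PP_{<\tau}}\DD \to \DD_\tau$ is a closed cofibration, and this is the main obstacle. Each $d_{\sigma\tau}$ is a closed cofibration, but a union of cofibrant subspaces need not be cofibrant in general. I would handle this by a secondary induction: build the latching object $L_\tau = \colim_{\PP_{<\tau}}\DD$ inside $\DD_\tau$ (the maps are injective closed cofibrations, so the colimit is a union of closed subspaces). Then use the standard fact that if $A\to X$ and $B\to X$ are closed cofibrations with $A\cap B\to A$ a closed cofibration, then $A\cup B\to X$ is a closed cofibration (Lillig's union theorem).

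In the situations of interest the required intersections are again images of the diagram. For the polyhedral product, $\ZZ_\sigma\cap\ZZ_{\sigma'} = \bigcup_{\rho\le\sigma,\sigma'}\ZZ_\rho$. So the hypothesis propagates along the induction. In full generality I would instead invoke Reedy cofibrancy: $\PP$ is a direct category, and closedness of latching maps is exactly what the Projection Lemma as cited from \cite{hocolim-comb} requires. I would present the argument with that Reedy latching condition made explicit.
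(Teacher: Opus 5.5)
The paper gives no argument of its own here (it only cites \cite{hocolim-comb}), and your skeleton is the standard one: Bousfield--Kan model, split off a maximal $\tau$, compare the two pushout squares with the gluing lemma. The gap is exactly the step you flag, and it cannot be closed. Your secondary induction assumes that, since each $d_{\sigma\tau}$ is an injective closed cofibration, $L_\tau=\colim_{\PP_{<\tau}}\DD$ is the union $\bigcup_{\sigma<\tau}d_{\sigma\tau}(\DD_\sigma)\subseteq\DD_\tau$. This is false. The colimit identifies $x\in\DD_\sigma$ with $y\in\DD_{\sigma'}$ only via zigzags inside $\PP_{<\tau}$, so $L_\tau\to\DD_\tau$ need not even be injective. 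In fact the lemma with only the hypothesis ``each $d_{\sigma\tau}$ is a closed cofibration'' is false.

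Take $\PP=\{1,2,12,21\}$ with $1,2<12,21$ (i.e.\ $\Gamma_2$ without its bottom) and $\DD$ constant at a point. All structure maps are homeomorphisms, and $\colim_\PP\DD=*$ since $\PP$ is connected. But $\hocolim_\PP\DD=|\Delta(\PP)|$ is the $4$-cycle $1,12,2,21$, a circle. Over the simplicial poset $\Gamma_2$ itself, take $\DD_\emptyset=*$, $\DD_1=\DD_2=[0,1]$ based at $0$, and $\DD_{12}=\DD_{21}$ the circle obtained by gluing these two arcs at both endpoints, with the evident inclusions. All maps are closed cofibrations and $\colim\cong S^1$, but splitting off $21$ exactly as in your induction gives $\hocolim\simeq S^1\vee S^1$. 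In both examples the latching map ($*\sqcup *\to *$, resp.\ $[0,1]\vee[0,1]\to S^1$) is not injective.

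So your final paragraph is not an optional alternative but a necessary change of hypothesis. What can be proved is the lemma assuming that each latching map $\colim_{\PP_{<\tau}}\DD\to\DD_\tau$ is a closed cofibration; as you note yourself, this is the hypothesis in \cite{hocolim-comb}. Under that assumption your argument is complete. The condition passes to the down-sets $\PP\setminus\{\tau\}$ and $\PP_{<\tau}$, since their latching objects are those of $\PP$, so the induction applies. In both squares the leg coming from $\PP_{<\tau}\subseteq\PP_{\le\tau}$ is a closed cofibration, so the gluing lemma finishes.

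For polyhedral products over a simplicial poset the condition holds because $\PP_{\le\tau}$ is Boolean. Hence $\ZZ_\sigma\cap\ZZ_{\sigma'}=\ZZ_{\sigma\wedge\sigma'}$ inside $\ZZ_\tau$ for $\sigma,\sigma'<\tau$, and your Lillig argument works there. However, your formula $\ZZ_\sigma\cap\ZZ_{\sigma'}=\bigcup_{\rho\le\sigma,\sigma'}\ZZ_\rho$, read globally in $\prod_i X_i$, fails for $\Gamma_n$ (e.g.\ $\ZZ_{12}=\ZZ_{21}=X_1\times X_2$ in $\Gamma_2$).
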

The following definition was introduced in \cite[Definition~1.2]{homo-type-diagram}. 
\begin{definition}
	Let $X$ be a topological space and $\mathcal{A} = \{A_1, \ldots, A_r\}$ be a collection of subspaces of $X$. The collection $\mathcal{A}$ is said to be an arrangement, if the following conditions hold:
	\begin{enumerate}
		\item For any $A,B \in \mathcal{A}$, $A\cap B$ is a union of elements in $\mathcal{A}$, and 
		\item for any $A,B \in \mathcal{A}$ such that $A \subset B$, the inclusion $A \to B$ is a cofibration. 
	\end{enumerate}
\end{definition}
The elements of an arrangement $\mathcal{A}$ admit a partial order given by inclusion. Let $I_\mathcal{A}$ be the inclusion poset $(\mathcal{A}, \subseteq)$. We define an $I_\mathcal{A}$-diagram of spaces $\DD(\mathcal{A})$, called \emph{subspace diagram} of $\mathcal{A}$, by setting $\DD_\sigma$ to be the subspace corresponding to $\sigma \in I_\mathcal{A}$, and for each pair $\sigma,\tau \in I_\mathcal{A}$, with $\sigma\leq\tau$, letting $d_{\sigma\tau}$ denote the corresponding inclusion map. Since the intersection of any pair of subspaces in $\mathcal{A}$ is a union of subspaces in $\mathcal{A}$, it follows that $\colim_I \DD(\mathcal{A})$ is homeomorphic to $\bigcup_{A_i \in \mathcal{A}} A_i$. Hence, we get: 

\begin{corollary}\cite[Corollary~2.4]{poset-fiber-thm}
    Let $\mathcal{A}$ be an arrangement of subspaces. Then 
    \[
    \hocolim_{I_\mathcal{A}}\DD(\mathcal{A}) \simeq \bigcup_{A_i \in \mathcal{A}} A_i.
    \]
\end{corollary}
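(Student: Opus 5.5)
The statement to prove is the corollary just above: if $\mathcal{A}$ is an arrangement of subspaces, then $\hocolim_{I_\mathcal{A}}\DD(\mathcal{A}) \simeq \bigcup_{A_i \in \mathcal{A}} A_i$. The plan is to combine the Projection Lemma (Lemma \ref{proj-lemma}) with an identification of the ordinary colimit of the subspace diagram. There are two steps.

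\textbf{Step 1: the hypotheses of the Projection Lemma hold.} Every structure map $d_{\sigma\tau}$ of $\DD(\mathcal{A})$ is an inclusion $A\hookrightarrow B$ with $A\subset B$ in $\mathcal{A}$. By condition (2) of an arrangement, each such inclusion is a cofibration. It must in fact be a \emph{closed} cofibration. In our setting the subspaces are subcomplexes of $CW$-spaces, hence closed, so this holds. In general I would either read condition (2) as asking for closed cofibrations, or use the fact that a cofibration into a Hausdorff (or compactly generated weak Hausdorff) space has closed image. Lemma \ref{proj-lemma} then gives $\hocolim_{I_\mathcal{A}}\DD(\mathcal{A})\simeq\colim_{I_\mathcal{A}}\DD(\mathcal{A})$.

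\textbf{Step 2: the colimit is the union.} Let $U=\bigcup A_i$ with the subspace topology from $X$. The inclusions $A_i\hookrightarrow U$ are compatible with the diagram, so they induce a continuous map $\Phi:\colim\DD(\mathcal{A})\to U$, and $\Phi$ is surjective.

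For injectivity, suppose $x\in A\cap B$. By condition (1), $A\cap B$ is a union of members of $\mathcal{A}$, so $x\in C$ for some $C\in\mathcal{A}$ with $C\subseteq A$ and $C\subseteq B$. In the colimit, the copies of $x$ in $A$ and in $B$ are both identified with the copy of $x$ in $C$. Hence the two copies are identified, and $\Phi$ is injective.

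For continuity of $\Phi^{-1}$, note that $\mathcal{A}$ is finite and each $A_i$ is closed in $U$. A finite closed cover of $U$ has the property that a set is closed exactly when its intersection with each piece is closed. This is precisely the colimit (weak) topology, so $\Phi$ is a homeomorphism. Combining Steps 1 and 2 gives the corollary.

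\textbf{Main obstacle.} The delicate part is the point-set topology: closedness of the $A_i$ and the finiteness of $\mathcal{A}$, which together make the colimit topology agree with the subspace topology on $U$. If closedness is not assumed, I would take it from the closed-cofibration hypothesis, as in Step 1. The combinatorial identification of points, via condition (1), is routine.
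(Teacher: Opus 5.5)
Your proof is correct and follows the same route as the paper. The paper also combines the Projection Lemma (its cofibration hypothesis supplied by condition (2) of an arrangement) with the identification of $\colim_{I_\mathcal{A}}\DD(\mathcal{A})$ with $\bigcup_{A_i\in\mathcal{A}} A_i$ via condition (1); your extra point-set details (closed images of cofibrations, and finiteness of the closed cover giving the colimit topology) fill in what the paper leaves implicit.
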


In \cite[Lemma~5.5, Proposition~5.6]{kishimoto2019polyhedral}, Kishimoto and Levi showed that when $\PP$ is a simplicial poset, the collection of spaces $\{\ZZ_\sigma\}_{\sigma \in \PP}$ forms an arrangement with the associated inclusion poset $I_\ZZ$ is isomorphic to $\PP$. Moreover, they proved that the hypotheses of the Projection Lemma \ref{proj-lemma} are satisfied, which implies: 
\begin{equation}\label{union-equiv-levi}
	\ZZ_\PP\textbf{(X,A)} \simeq \hocolim_{I_\ZZ} \DD(\ZZ_\sigma) \simeq \colim_{I_\ZZ} \DD(\ZZ_\sigma) \simeq \bigcup_{\sigma \in \PP} Z_\sigma,
\end{equation}
where $\DD(\ZZ_\sigma)$ is the subspace diagram of $\{\ZZ_\sigma\}_{\sigma \in \PP}$.

Another useful property of homotopy colimits is its relation with homotopy fibres. The following result is due to Puppe.

\begin{theorem}\cite[Appendix~HL]{dror-book}\label{puppe-theo}
Let $\{F_i \to E_i \to B\}_{i \in \II}$ be a $\II$-diagram of homotopy fibrations over a fixed space $B$. Then 
\[\hocolim_\II F_i \to \hocolim_\II E_i \to B\]
is a homotopy fibration. 
\end{theorem}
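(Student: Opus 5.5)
The plan is to reduce everything to the statement that, in compactly generated spaces, pulling back along a fibration commutes with homotopy colimits; this is the \emph{descent} or \emph{universality} property of homotopy colimits. Write $p_i\colon E_i\to B$ for the given maps. They are compatible, meaning $p_j\circ e_{ij}=p_i$ for $i\leq j$ in $\II$. The induced map is $p\colon \hocolim_\II E_i\to B$, and the goal is to identify its homotopy fibre with $\hocolim_\II F_i$.

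\textbf{Step 1 (a fibration to pull back along).} Let $PB=\{\gamma\colon[0,1]\to B:\gamma(1)=b_0\}$ be the based path space, with $q\colon PB\to B$, $\gamma\mapsto\gamma(0)$. This $q$ is a Hurewicz fibration with contractible total space. For any map $f\colon X\to B$, the pullback $X\times_B PB$ is a standard model of the homotopy fibre of $f$, and this model is natural in $X$ over $B$. Applied to the diagram, this gives an $\II$-diagram $i\mapsto E_i\times_B PB$. By hypothesis each $E_i\times_B PB$ is weakly equivalent to $F_i$. I will interpret the hypothesis so that these equivalences are natural in $i$: the $F_i$ are defined up to natural equivalence as these homotopy fibres, which is what a diagram of homotopy fibrations means.

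\textbf{Step 2 (commuting pullback and hocolim).} Model $\hocolim_\II E_i$ as the geometric realization of the simplicial replacement $[k]\mapsto\coprod_{i_0\leq\cdots\leq i_k}E_{i_0}$. This is equivalently the quotient of $\coprod_{\sigma}\Delta^{|\sigma|}\times E_{i_0}$ over chains $\sigma$ of the nerve of $\II$. The projection $p$ is induced levelwise by the $p_{i_0}$. I would prove the natural homeomorphism
\[
\Big(\hocolim_\II E_i\Big)\times_B PB\;\cong\;\hocolim_\II\big(E_i\times_B PB\big).
\]
It rests on three facts in the category of compactly generated (weak Hausdorff) spaces:
\begin{enumerate}
\item coproducts are stable under pullback;
\item $(\Delta^k\times X)\times_B Y\cong\Delta^k\times(X\times_B Y)$, since $\Delta^k$ maps trivially to $B$;
\item the quotient map defining the realization remains a quotient map after applying $-\times_B PB$.
\end{enumerate}
The third fact holds because colimits in compactly generated spaces are universal: $-\times_B Y$ has a right adjoint in spaces over $B$.

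\textbf{Step 3 (conclusion).} The left side of the displayed homeomorphism is the homotopy fibre of $p$. The right side is weakly equivalent to $\hocolim_\II F_i$, by homotopy invariance of homotopy colimits applied to the natural equivalences of Step 1. So the sequence $\hocolim_\II F_i\to\hocolim_\II E_i\to B$ is a homotopy fibration, as claimed.

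\textbf{Main obstacle.} The main obstacle is the point-set Step 2, specifically the third fact: commuting the pullback with the quotient map of the realization. This is where working in compactly generated spaces (local cartesian closedness) is essential. If that route becomes awkward, I would instead induct over the skeleta of the nerve of $\II$, presenting $\hocolim$ as iterated homotopy pushouts. Mather's second cube theorem then says homotopy pushouts of spaces over $B$ are preserved by taking homotopy fibres, which handles each stage. A sequential-colimit argument over the skeleta finishes the proof; since $\II$ is finite here, this last step is not even needed.
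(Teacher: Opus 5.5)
Your argument is correct. Note that the paper contains no proof of this statement: it is quoted as a black box from Dror Farjoun's Appendix~HL (going back to Puppe). So there is nothing internal to match, and what you give is a self-contained substitute rather than a variant of the paper's reasoning.

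Your route is the descent argument. You take the strict pullback along the path fibration $PB\to B$ as the model of the homotopy fibre. Because the simplicial replacement is a simplicial space over the constant simplicial space $B$, this pullback commutes with realization. You then conclude by homotopy invariance of the bar construction, which is legitimate because its degeneracies are inclusions of summands.

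Two small remarks.
\begin{enumerate}
\item The homeomorphism of Step~2 holds for pullback along \emph{any} map $Y\to B$. The fact that $q$ is a fibration with contractible total space enters only in recognising $X\times_B PB$ as the homotopy fibre.
\item The right-adjoint justification of fact~(3) is the natural one for $k$-spaces. In the weak Hausdorff setting you can argue directly instead. Write $X_k=\coprod_{i_0\le\cdots\le i_k}E_{i_0}$ and $\pi\colon\coprod_k\Delta^k\times X_k\to|X_\bullet|$ for the quotient map. Then $\bigl(\coprod_k\Delta^k\times X_k\bigr)\times_B PB$ is the preimage under $\pi\times 1_{PB}$ of $|X_\bullet|\times_B PB$. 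Here $\pi\times 1_{PB}$ is a quotient map, since $-\times PB$ preserves quotients in compactly generated spaces. The subspace $|X_\bullet|\times_B PB$ is closed because $B$ is weak Hausdorff. Finally, restricting a quotient map to a closed saturated subspace gives a quotient map.
\end{enumerate}

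As for what each approach buys: the citation gives brevity and the standard formulation. That formulation is usually stated for a connected base, so that the choice of base point is immaterial. Your proof makes the hypothesis explicit. What is needed is a strictly commuting diagram over $B$ whose composites $F_i\to B$ are constant at $b_0$, so that the comparison maps $F_i\to E_i\times_B PB$ are natural in $i$. Your proof also shows that, at a fixed base point, no connectivity of $B$ is required. The Mather-cube fallback is not needed.
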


An immediate application of Puppe's theorem recovers the well-known homotopy fibration \cite[Theorem~4.3.2]{toric-comb-alg-buchstaberpanov}
\begin{align}
\ZZ_K \to DJ_K \to BT^n, \label{classical-homotopy-fibration}
\end{align}
where $K$ is the face poset of simplicial complex on $[n]$ vertices and $BT^n = (\C P^\infty)^n$. More precisely, since any simplicial complex on $[n]$ injects into the simplex $\Delta^{n-1}$ and $D^2 \simeq *$, we have 
\[
\{Z_\sigma(D^2,S^1) \to Z_\sigma(\C P^\infty,*) \to (\C P^\infty)^n\}_{\sigma \in K},
\]
is a homotopy fibration for each $\sigma \in K $and the result follows. More generally, there is a homotopy fibration 
\begin{align}
    \ZZ_\PP \to DJ_\PP \to BT^n \label{folding-homotopy-fibration}
\end{align}
where $\PP$ is a simplicial poset on $[n]$ vertices. This is contructed as follows. For any simplicial poset $\PP$ there is the associated simplicial complex $K_\PP$ on the same vertex set $V_\PP$, whose simplices are the sets $V_\PP(\sigma)$, $\sigma \in \PP$. This is given by a map of simplicial posets 
\[
f: \PP \to K_\PP, \sigma \mapsto V_\PP(\sigma)
\]
called the folding map \cite[Construction~2.8.3]{buchstaber2014toric}. The composition of the folding map with the inclusion map of $K_\PP$ into $\Delta^{n-1}$ induces a map between polyhedral products and the homotopy fibration follows from Theorem \ref{puppe-theo}.

\paragraph{Cohomology of moment angle complexes.} L\"u and Panov in \cite{lue-panov} studied the cohomology of $\ZZ_\PP$ moment-angle complexes associated to simplicial posets, and we recall their results below, starting with the necessary definitions and notation. 

Let $\Z[v_\sigma: \sigma \in \PP]$ be the graded polynomial ring with one generator $v_\sigma$ of degree $\deg v_\sigma = 2 |\sigma|$ for every $\sigma \in \PP$. The Stanley-Reisner ring (over $\Z$) of a simplicial poset $\PP$ is the quotient 
\[
\Z[\PP] = \Z[v_\sigma: \sigma \in \PP] / I_\PP,
\]
where $I_\PP$ denotes the ideal generated by the element $v_{\emptyset} - 1$ and the elements $v_\sigma v_\tau$ whenever $\sigma$ and $\tau$ have no common upper bound in $\PP$, together with the relations $v_\sigma v_\tau - (v_{\sigma \wedge \tau}) \cdot \left(\sum_{\eta \in [\sigma \vee \tau]} v_\eta\right)$ otherwise, where $\sigma \wedge \tau$ and $[\sigma \vee \tau]$ denote, respectively, their greatest common lower bound (meet) and the set of their least common upper bounds (joins). Since $\PP$ is a simplicial poset, $\sigma \wedge \tau$ consists of a single element whenever $[\sigma \vee \tau]$ is non-empty. This definition extends the classical one for simplicial complexes \cite[Definition~3.3]{STANLEY-fhvectors}.

There is a $(\Z \oplus \Z^n)$-graded decomposition of the $Tor$-algebra of the Stanley-Reisner ring $\Z[\PP]$. The grading goes as follows. Firstly, consider the polynomial algebra $\Z[v_1, \ldots, v_n]$ on $n$ generators of degree $2$ corresponding to the vertex set of $\PP$ and denote it by $\Z[n]$. A $\Z^n$-grading in $\Z[n]$ is defined by setting $\deg(v_1^{i_1}\ldots v_n^{i_n}) = (2i_1,\ldots, 2i_n)$. As the Stanley-Reisner ring $\Z[\PP]$ is a $\Z[n]$-algebra via the map $\Z[v_1,\ldots,v_n] \to \Z[\PP]$, which maps each $v_i$ identically, it inherits this $\Z^n$-grading. Recall that $\Z$ has a $\Z[n]$-module structure given by the augmentation map sending each $v_i$ to zero, and the so-called Kozul resolution is a standard way of creating a free resolution of $\Z$. We follow the convention used in \cite{lue-panov}, which numbers the terms in a resolution by non-positive integers. Then, a free (or projective) resolution of a $\Z[n]$-module $M$ is a exact sequence of $\Z[n]$-modules  
\[
\ldots \to P^{-2} \xrightarrow{\delta} P^{-1} \xrightarrow{\delta} P^0 \xrightarrow{\epsilon} M \to 0
\] 
in which $P^{-i}$ is a free (or projective) $\Z[n]$-module for every $i$. Therefore, if we resolve the $\Z[n]$-module $\Z$ with the Kozul resolution and then tensor it with $\otimes_{\Z[n]} \Z[\PP]$, we obtain a $(\Z \oplus \Z^n)$-graded $Tor$-algebra of $\Z[\PP]$ as \[Tor_{\Z[n]}(\Z[\PP],\Z) = \bigoplus_{i \geq 0, a \in \Z^n}Tor^{-i,2a}_{\Z[n]}(\Z[\PP],\Z).\]
For more details, please consult \cite{lue-panov} or \cite[Chapter~4.10,Appendix~A.2]{buchstaber2014toric}.

In \cite{lue-panov}, L\"u and Panov proved the following results about the additive structure of the cohomology of $\ZZ_\PP$.

\begin{theorem}\cite[Theorem~3.5]{lue-panov} \label{theo1-lue-panov}
    There are group isomorphisms
    \[
    H^p(\ZZ_\PP,\Z) \simeq \bigoplus_{-i + 2|a| = p} Tor^{-i,2a}_{\Z[n]}(\Z[\PP],\Z)
    \]
    in each degree $p$. Here $|a| = j_1 + \ldots + j_n$ for $a = (j_1, \ldots, j_n)$. 
\end{theorem}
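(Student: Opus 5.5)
We will follow the cellular-cochain strategy that Buchstaber and Panov use for simplicial complexes, and adapt it to simplicial posets. The idea is to compute both sides of the isomorphism in the statement from explicit cochain complexes and to compare them degree by degree in the $\Z^n$-grading.

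\textbf{Tor side.} Resolving $\Z$ over $\Z[n]$ by the Koszul resolution and tensoring with $\Z[\PP]$ gives
\[
Tor_{\Z[n]}(\Z[\PP],\Z)=H\bigl(\Lambda[u_1,\ldots,u_n]\otimes\Z[\PP],\,d\bigr),\qquad du_i=v_i,\quad dv_\sigma=0 .
\]
Here $u_i$ has bidegree $(-1,2e_i)$. This Koszul complex is $(\Z\oplus\Z^n)$-graded. We will use the additive basis of $\Z[\PP]$ (Stanley; L\"u--Panov), given by the monomials $v_{\sigma_1}\cdots v_{\sigma_k}$ over chains $\sigma_1<\cdots<\sigma_k$ in $\PP$. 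Thus every $\Z^n$-graded piece is a finitely generated free abelian group.

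\textbf{Topological side.} Give each pair $(D^2,S^1)$ the cell structure $e^0\cup e^1\cup e^2$. By \eqref{union-equiv-levi}, $\ZZ_\PP\simeq\bigcup_{\sigma\in\PP}\ZZ_\sigma$, and each $\ZZ_\sigma=(D^2)^{V(\sigma)}\times(S^1)^{[n]\setminus V(\sigma)}$ is a subcomplex. Different $\sigma$ with the same vertex set contribute different top cells of $(D^2)^{V(\sigma)}$. Hence the cells of the union are indexed by pairs $(\sigma,J)$ with $\sigma\in\PP$ and $J\subseteq[n]\setminus V(\sigma)$, and the cell $(\sigma,J)$ has dimension $2|\sigma|+|J|$. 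This cell decomposition carries an evident $\Z^n$-multigrading by the characteristic vector of $V(\sigma)\cup J$, with all multidegrees squarefree. Its cellular cochain complex $C^*(\ZZ_\PP)$ is generated by dual cochains $u_Jv_\sigma$, and we will write down its coboundary explicitly. Differentiating $u_J$ produces $\pm u_{J\setminus j}v_j$, and $v_jv_\sigma$ is replaced by the sum of the $v_\eta$ over the joins $\eta\in[j\vee\sigma]$. This is exactly how the $2$-cell of $D^2$ in coordinate $j$ attaches across all simplices $\eta$ that cover $\sigma$ with $V(\eta)=V(\sigma)\cup\{j\}$.

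\textbf{Comparison.} Define
\[
R=\Lambda[u]\otimes\Z[\PP]\big/\bigl(v_i^2,\;u_iv_i\bigr)_{i\in[n]} .
\]
Using the chain basis and the relations $v_iv_\sigma=\sum_{\eta\in[i\vee\sigma]}v_\eta$ for $i\notin V(\sigma)$, we will check that $R$ has additive basis $\{u_Jv_\sigma : J\cap V(\sigma)=\emptyset\}$. We will then check that the induced differential on $R$ agrees with the cellular coboundary above, giving an isomorphism of complexes $R\cong C^*(\ZZ_\PP)$. The bidegrees match: $u_Jv_\sigma$ has bidegree $-i=-|J|$ and $2a=2(\chi_J+\chi_{V(\sigma)})$. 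Hence $-i+2|a|=|J|+2|\sigma|$, which is the cell dimension, and this gives the degree bookkeeping in the statement.

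\textbf{Main obstacle.} The hard step is to show that the projection $\Lambda[u]\otimes\Z[\PP]\to R$ is a quasi-isomorphism. Equivalently, the kernel has to be acyclic, and this should be done one $\Z^n$-degree at a time. For a non-squarefree multidegree $a$, pick $i$ with $a_i\ge 2$. We will try the contracting homotopy that sends $v_i\cdot x\mapsto u_i\cdot x$ on basis monomials containing $v_i$ to a positive power. Compatibility with the chain basis must be checked, since $\Z[\PP]$ is not a monomial ring and $v_i$ multiplies $v_\sigma$ into a sum over joins. This is where the simplicial-poset hypothesis is needed: below each element the poset is Boolean, so meets are unique and the join relations behave locally like those of a simplex. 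For squarefree $a$, we will show that the kernel vanishes in that degree. This uses the fact that the restriction to vertices in $\mathrm{supp}(a)$ has $v_i$-degree at most one.

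Once this quasi-isomorphism is established, we get
\[
H^p(\ZZ_\PP)\cong H^p(C^*)\cong H(R)\cong\bigoplus_{-i+2|a|=p}Tor^{-i,2a}_{\Z[n]}(\Z[\PP],\Z).
\]
As an independent consistency check, one can also run the Eilenberg--Moore spectral sequence of the fibration \eqref{folding-homotopy-fibration}, using $H^*(DJ_\PP)\cong\Z[\PP]$.
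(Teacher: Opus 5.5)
Note first that the paper gives no proof of this statement; it is quoted from L\"u--Panov, so your argument has to stand on its own. The overall architecture is right: Koszul complex, cells $(\sigma,J)$ with $J\cap V(\sigma)=\emptyset$, identification with a quotient of $\Lambda[u]\otimes\Z[\PP]$, and acyclicity of the kernel. Your description of the cellular coboundary is also correct.

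\textbf{The gap: the comparison step fails as written.} The quotient $R=\Lambda[u]\otimes\Z[\PP]/(v_i^2,u_iv_i)$ does \emph{not} have the basis $\{u_Jv_\sigma: J\cap V(\sigma)=\emptyset\}$ once $\PP$ is not a simplicial complex. The relations $v_i^2=u_iv_i=0$ kill $u_iv_\sigma$ for $i\in V(\sigma)$, and $v_\sigma v_\tau$ for $V(\sigma)\cap V(\tau)\neq\emptyset$, only because $v_\sigma=\prod_{j\in V(\sigma)}v_j$ for simplicial complexes. A counterexample is $\PP=\Gamma_2$:
\begin{itemize}
\item One has $v_1v_2=v_{12}+v_{21}$, and $v_{12}\notin(v_1)$.
\item Work in multidegree $a=(2,1)$, i.e.\ $2a=(4,2)$. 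In homological degree $0$ the ideal $(v_i^2,u_iv_i)$ contains only $\Z\cdot v_1^2v_2=\Z\cdot(v_1v_{12}+v_1v_{21})$.
\item In homological degree $-1$ it contains only the span of $u_2v_1^2$ and $u_1(v_{12}+v_{21})$.
\end{itemize}
Hence $v_1v_{12}$ and $u_1v_{12}$ are nonzero in $R$. So $R\not\cong C^*(\ZZ_\PP)$, precisely in the cases the theorem is about.

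\textbf{Repairing the quotient.} Divide instead by the ideal generated by $u_iv_\sigma$ ($i\in V(\sigma)$) and $v_\sigma v_\tau$ ($V(\sigma)\cap V(\tau)\neq\emptyset$). In the multichain basis, the elements of non-squarefree multidegree are exactly the chains of length $\geq 2$. So this ideal is exactly the sum of all non-squarefree multigraded pieces. Since $d$ preserves multidegree, the new $R$ is the squarefree direct summand, and your coboundary formula identifies it with $C^*(\ZZ_\PP)$.

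\textbf{Repairing the acyclicity step.} What remains is to show that every non-squarefree summand of the Koszul complex is acyclic. Your homotopy ``$v_ix\mapsto u_ix$ on monomials containing $v_i$'' is not well defined on the multichain basis. For instance, $v_{12}^2\in\Z[\Gamma_2]$ has $a_1=2$ but no factor $v_1$, even though $v_{12}^2=v_1\cdot v_2v_{12}$. The fact you actually need is:
\begin{itemize}
\item For $b_i\geq 1$, multiplication $v_i\colon\Z[\PP]_b\to\Z[\PP]_{b+e_i}$ is an isomorphism.
\item Reason: both groups have a basis indexed by $\{\sigma: V(\sigma)=\mathrm{supp}\,b\}$, namely the unique multichain in the Boolean interval $\PP_{\leq\sigma}$ with that multidegree.
\item $v_i$ sends basis to basis, because $v_\eta v_\sigma=0$ whenever $V(\eta)\subseteq V(\sigma)$ but $\eta\not\leq\sigma$.
\end{itemize}
Now fix $a$ with $a_i\geq 2$. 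The degree-$a$ piece of the Koszul complex is the mapping cone of the chain isomorphism $v_i$ from $(\Lambda[u_j:j\neq i]\otimes\Z[\PP])_{a-e_i}$ to $(\Lambda[u_j:j\neq i]\otimes\Z[\PP])_a$. It is therefore contractible. With these two corrections your argument goes through.
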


 \begin{corollary}
     \cite[Corollary~3.9,3.10]{lue-panov} \label{cor1-lue-panov}
    The groups $Tor^{-i,2a}_{\Z[n]}(\Z[\PP],\Z)$ vanish for $a \not\in \{0,1\}^n$ and for every $a \in \{0,1\}^n$ there is an isomorphism 
    \[
    Tor^{-i,2a}_{\Z[n]}(\Z[\PP],\Z) \simeq \tilde{H}^{|a| - i - 1}(|\PP_a|).
    \]
\end{corollary}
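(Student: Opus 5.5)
The plan is to compute the Tor-groups directly from the Koszul resolution, in the spirit of Hochster's formula for Stanley--Reisner rings of simplicial complexes. Resolving $\Z$ by the Koszul resolution and tensoring with $\Z[\PP]$ identifies $Tor_{\Z[n]}(\Z[\PP],\Z)$ with the cohomology of the differential graded algebra
\[
\bigl(\Lambda[u_1,\ldots,u_n]\otimes \Z[\PP],\, d\bigr),\qquad du_i = v_i,\ dv_\sigma = 0,
\]
with bidegrees $\mathrm{bideg}\, u_i=(-1,2e_i)$ and $\mathrm{bideg}\, v_\sigma = (0,2\sum_{j\in V_\PP(\sigma)} e_j)$. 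Here $v_i$ denotes $v_\sigma$ for the vertex $\sigma=i$. The differential preserves the $\Z^n$-grading, so the complex splits as a direct sum of its multigraded pieces, and it suffices to compute each piece.

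\textbf{Additive basis.} I will use Stanley's description of $\Z[\PP]$ as a free abelian group. A basis is given by the chain monomials $v_{\sigma_1}\cdots v_{\sigma_k}$ with $\sigma_1\le\cdots\le\sigma_k$. Every relation $v_\sigma v_\tau = v_{\sigma\wedge\tau}\sum_{\eta\in[\sigma\vee\tau]}v_\eta$ lets one rewrite products into this form.

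\textbf{Key step: reduction to squarefree multidegrees.} Let $J$ be the ideal generated by:
\begin{itemize}
\item the products $v_\sigma v_\tau$ with $V_\PP(\sigma)\cap V_\PP(\tau)\neq\emptyset$, and
\item the products $u_i v_\sigma$ with $i\in V_\PP(\sigma)$.
\end{itemize}
Then $J$ is a $d$-closed ideal. I would show that the projection
\[
\Lambda[u]\otimes\Z[\PP]\;\to\; R:=\Lambda[u]\otimes\Z[\PP]/J
\]
is a quasi-isomorphism. This is the step I expect to be the main obstacle. I would try first to imitate the argument used for simplicial complexes (Buchstaber--Panov), working one vertex at a time.
\begin{itemize}
\item For a fixed vertex $i$, filter by the exponent of $i$ in the multidegree.
\item In multidegree components where the $i$-th coordinate is $\ge 1$ and ``repeated'', build an explicit contracting homotopy $s$ that replaces a factor $v_\eta$ containing the vertex $i$ by $u_i v_{\eta'}$, where $\eta'$ is the face of $\eta$ opposite to $i$. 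This mimics the acyclicity of the one-variable Koszul complex $\Z[v_i]\otimes\Lambda[u_i]$ in positive degrees.
\item The simplicial-poset property is what makes $\eta'$ unique and keeps $s$ well defined.
\end{itemize}
Once this holds, $R$ has additive basis $u_I v_\sigma$ with $I\cap V_\PP(\sigma)=\emptyset$, where $I\subseteq[n]$ and $\sigma\in\PP$. These elements have multidegrees in $\{0,1\}^n$, which gives the vanishing of $Tor^{-i,2a}_{\Z[n]}(\Z[\PP],\Z)$ for $a\notin\{0,1\}^n$.

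\textbf{Identification with cochains.} Fix $a\in\{0,1\}^n$, viewed as a subset of $[n]$. The degree-$(-i,2a)$ part of $R$ is spanned by the elements $u_{a\setminus V_\PP(\sigma)}v_\sigma$ with $\sigma\in\PP_a$ and $|a|-|V_\PP(\sigma)| = i$. Send $u_{a\setminus V_\PP(\sigma)}v_\sigma$ to the dual of the cell $\sigma$ of $|\PP_a|$. The empty cell $\sigma=\emptyset$ corresponds to the augmentation, which is why reduced cohomology appears. The cell $\sigma$ has dimension $|\sigma|-1 = |a|-i-1$, so this matches the claimed degree. The differential becomes
\[
d(u_I v_\sigma)=\sum_{j\in I}\pm\, u_{I\setminus j}\, v_jv_\sigma,
\]
and in $R$ we have $v_jv_\sigma=\sum_{\eta}v_\eta$. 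The sum runs over the $\eta\in\PP_a$ that cover $\sigma$ and have $V_\PP(\eta)=V_\PP(\sigma)\cup\{j\}$, because $\sigma\wedge j=\emptyset$ and the joins are exactly these $\eta$. This is precisely the simplicial coboundary of the simplicial cell complex $|\PP_a|$. It then remains to check that the signs coming from the exterior algebra agree with the incidence numbers for a suitable orientation convention. This yields
\[
Tor^{-i,2a}_{\Z[n]}(\Z[\PP],\Z)\cong\tilde H^{|a|-i-1}(|\PP_a|).
\]
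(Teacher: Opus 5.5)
The paper gives no proof of this statement. It quotes it from L\"u--Panov and combines it with their Theorem~3.5 to obtain Theorem~\ref{lue-panov}, so there is no in-paper argument to compare yours with. Your proposal is a correct, self-contained algebraic proof. It has three steps:
\begin{enumerate}
\item pass from the Koszul complex $\Lambda[u]\otimes\Z[\PP]$ to the quotient $R$;
\item note that the kernel of this quotient is exactly the sum of the non-squarefree multidegree components, and show it is acyclic;
\item identify the multidegree-$a$ part of $R$ with the augmented cellular cochain complex of $|\PP_a|$, suitably shifted.
\end{enumerate}
This is the Hochster-type computation carried over to face rings of simplicial posets. Your route establishes the Tor statement purely algebraically, without the topology of $\ZZ_\PP$. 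The citation instead gives the paper this statement bundled with the identification $H^*(\ZZ_\PP)\cong Tor$, which is what it actually needs downstream.

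One step must be stated more precisely than you do. The instruction ``replace a factor $v_\eta$ containing $i$ by $u_i v_{\eta'}$'' is ambiguous and in general false as an inverse. In $\Z[\PP]$, $v_i v_{\eta'}$ is the sum of $v_{\eta''}$ over \emph{all} $\eta''$ covering $\eta'$ with vertex set $V(\eta')\cup\{i\}$, not $v_\eta$.

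The lemma you need is this: if $\alpha_i\ge 1$, multiplication by $v_i$ maps $\Z[\PP]_{2\alpha}$ isomorphically onto $\Z[\PP]_{2(\alpha+e_i)}$. It holds because a chain monomial of multidegree $\alpha$ is determined by its top element $\sigma$, where $V(\sigma)=\operatorname{supp}\alpha$; the rest of the chain is forced by the Boolean structure of $\PP_{\le\sigma}$.

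The inverse removes $i$ from the \emph{lowest} chain factor containing $i$. This works only because a higher factor $v_\mu$ with $i\in V(\mu)$ is still present. Every $\eta''$ other than the face of $\mu$ with vertex set $V(\eta')\cup\{i\}$ has no common upper bound with $\mu$, so $v_{\eta''}v_\mu=0$.

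With this lemma, define $s(u_I\otimes m)=\pm u_i u_I\otimes v_i^{-1}m$ for $i\notin I$ and $s=0$ for $i\in I$. Then $ds+sd=\mathrm{id}$ in every multidegree $\beta$ with $\beta_i\ge 2$, which is exactly the acyclicity of your ideal. You should also invoke Stanley's chain-monomial basis over $\Z$ including linear independence, not only spanning; the bijection between bases above depends on it.
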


Combining Theorem \ref{theo1-lue-panov} and Corollary \ref{cor1-lue-panov} with the substitution $-i = p - 2|a|$, we can reformulate the additive structure of the cohomology of $\ZZ_\PP$ as: 
 \begin{theorem}\label{lue-panov}
    For a simplicial poset $\PP$ on $[n]$, there are the following isomorphism of modules \[ H^0(\ZZ_\PP) \simeq \tilde{H}^{-1}(|P_\emptyset|) = \Z \text{ and } H^p(\ZZ_\PP) \simeq \bigoplus_{a \subset [n]} \tilde{H}^{p - |a| - 1}(|\PP_a|)\]
 \end{theorem}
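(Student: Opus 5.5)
The plan is to obtain the statement by combining Theorem \ref{theo1-lue-panov} with Corollary \ref{cor1-lue-panov}, reindexing the bigrading, and then checking that the bookkeeping constraints can be dropped from the sum.

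First, fix the degree $p$ and start from the decomposition $H^p(\ZZ_\PP,\Z)\simeq \bigoplus_{-i+2|a|=p} Tor^{-i,2a}_{\Z[n]}(\Z[\PP],\Z)$ of Theorem \ref{theo1-lue-panov}, where $i\ge 0$ and $a\in\Z^n$. By Corollary \ref{cor1-lue-panov}, every summand with $a\notin\{0,1\}^n$ vanishes. I would therefore restrict to $a\in\{0,1\}^n$ and identify such $a$ with the subset $\{j: j\text{-th coordinate of } a \text{ is } 1\}\subseteq[n]$. Under this identification $|a|$ is the cardinality of the subset and $\PP_a$ is the full subposet on that vertex set, as defined in Section \ref{sec-preliminaries}.

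Second, for each $a$ the constraint $-i+2|a|=p$ determines $i$ uniquely as $-i=p-2|a|$. Substituting into $Tor^{-i,2a}\simeq \tilde H^{|a|-i-1}(|\PP_a|)$ gives the degree $|a|+(p-2|a|)-1=p-|a|-1$. Hence the summand indexed by $a$ is $\tilde H^{p-|a|-1}(|\PP_a|)$, and summing over $a$ gives the claimed formula. The only subtlety is the range restriction $i\ge 0$, i.e.\ $p\le 2|a|$, which must be shown to be harmless so that the sum may run over all $a\subseteq[n]$. If $p>2|a|$, then $p-|a|-1>|a|-1$. On the other hand, $|\PP_a|$ is a simplicial cell complex on $|a|$ vertices, so every simplex has at most $|a|$ vertices and $\dim|\PP_a|\le |a|-1$. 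Its reduced cohomology therefore vanishes in degree $p-|a|-1$, so these extra summands contribute nothing. This dimension check is the step I expect to need the most care, even though it is elementary.

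Finally, for $p=0$ the formula gives the summands $\tilde H^{-|a|-1}(|\PP_a|)$. For $a\neq\emptyset$ the degree is below $-1$, so the group vanishes. For $a=\emptyset$, $\PP_\emptyset=\{\emptyset\}$ realizes the empty complex, and with the standard convention $\tilde H^{-1}(\emptyset)=\Z$ we get $H^0(\ZZ_\PP)\simeq\Z$. This matches the connectedness of $\ZZ_\PP$, and it is consistent with $\tilde H^{-1}$ vanishing for nonempty complexes.
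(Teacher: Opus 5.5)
Your proposal is correct and follows the paper's route. The paper likewise combines Theorem \ref{theo1-lue-panov} with Corollary \ref{cor1-lue-panov} via the substitution $-i = p-2|a|$. Your extra check that the summands with $p>2|a|$ (i.e.\ $i<0$) vanish, using $\dim|\PP_a|\le |a|-1$, is sound. The paper leaves that step implicit.
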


\section{Homotopy type of $\ZZ_{\Gamma_n}$}\label{sec-homoto-type}

In this section we compute the homotopy type of $\ZZ_{\Gamma_n}$ and establish auxiliary results along the way. We start by determining the additive structure of its cohomology.

\begin{proposition}\label{add-co-inj}
The cohomology  of $\ZZ_{\Gamma_n}$ is isomorphic (as $\Z$-module) to the cohomology of wedge of $h_k$ spheres of dimension $2k$ for each $1 < k \leq n$, where $h_k$ is the $k$th element of the $h$-vector of $\Gamma_n$.
\end{proposition}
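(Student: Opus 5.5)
We apply Theorem \ref{lue-panov} to $\PP = \Gamma_n$ and compute each summand $\tilde{H}^{p-|a|-1}(|(\Gamma_n)_a|)$. The first observation is that for $a \subseteq [n]$ with $|a| = m$, the full subposet $(\Gamma_n)_a$ consists of all injective words whose letters lie in $a$. After relabelling the alphabet, this is exactly $\Gamma_m$.

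By Theorem \ref{theo-shell-inj}, $|\Gamma_m| \simeq \bigvee_{d(m)} S^{m-1}$. Hence $\tilde{H}^{q}(|(\Gamma_n)_a|)$ is $\Z^{d(m)}$ for $q = m-1$ and zero otherwise. The cases $m=0$ and $m=1$ need care:
\begin{itemize}
\item for $m=0$ the complex is empty, with $\tilde H^{-1}=\Z$, and this is the $H^0$ term;
\item for $m=1$ it is a point, so it contributes nothing, consistent with $d(1)=0$.
\end{itemize}
For $|a| = m \geq 2$, the only nonzero contribution occurs when $p - m - 1 = m-1$, that is, in degree $p = 2m$. Summing over the $\binom{n}{m}$ subsets of size $m$ gives
\[
H^{2m}(\ZZ_{\Gamma_n}) \simeq \Z^{\binom{n}{m} d(m)}, \qquad 2\le m\le n,
\]
and all other positive-degree groups vanish. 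Since every group is free, this is precisely the cohomology of $\bigvee_m \binom{n}{m}d(m)\, S^{2m}$.

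It remains to verify the combinatorial identity $h_k(\Gamma_n) = \binom{n}{k} d(k)$. This is the step I expect to be the main obstacle, since it is where the argument actually uses the complex of injective words. The complex $\Gamma_n$ has dimension $n-1$, so in Definition \ref{dfn-f-h-vector} the index is $n$, with $f_{i-1}$ equal to the number of injective words of length $i$, namely $n!/(n-i)!$. Then
\[
h_k = \sum_{i=0}^{k} (-1)^{k-i}\binom{n-i}{k-i}\frac{n!}{(n-i)!}.
\]
Using the identity $\binom{n-i}{k-i}\frac{n!}{(n-i)!} = \binom{n}{k}\frac{k!}{(k-i)!}$, this becomes
\[
h_k = \binom{n}{k}\sum_{i=0}^k (-1)^{k-i}\frac{k!}{(k-i)!}.
\]
Reindexing with $j = k-i$ turns the inner sum into $\sum_{j=0}^k (-1)^j k!/j!$, which is $d(k)$ by the derangement formula recalled after Theorem \ref{theo-shell-inj}. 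Finally, $h_1 = \binom{n}{1}d(1) = 0$ and $h_0=1$ matches $H^0$, which explains the restriction $1<k\le n$ in the statement.
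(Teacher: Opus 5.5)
Your proposal is correct and follows essentially the same route as the paper: apply the L\"u--Panov decomposition, identify each full subposet $(\Gamma_n)_a$ with $\Gamma_{|a|}$, invoke the Bj\"orner--Wachs result $\Gamma_m \simeq \bigvee_{d(m)} S^{m-1}$ to place all contributions in degree $2m$, and verify $h_k=\binom{n}{k}d(k)$ by the same $f$-vector computation (your identity $\binom{n-i}{k-i}\frac{n!}{(n-i)!}=\binom{n}{k}\frac{k!}{(k-i)!}$ is a compressed form of the paper's algebra). Your explicit treatment of the cases $|a|=0$ and $|a|=1$ is somewhat more careful than the paper's write-up, but it is not a different argument.
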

\begin{proof}
Note that there are $n \choose k$  copies of $\Gamma_k$ contained in $\Gamma_n$, for $1 \leq k \leq n$, and the $i$-th component of the $f$-vector of $\Gamma_n$ is given by $f_i = {n \choose i+1}(i+1)!$. Recall that the number of derangements in a $k$-element set is $d(k) = \sum_{i=0}^{k} \frac{(-1)^{i}}{i!} k!$.

A calculation shows that $h_k = d(k){n \choose k}$, where $h_k$ is the $k$-th component of $\textbf{h}(\Gamma_n)$ for every $k \leq n$. Indeed, we have 
    \begin{align}
        h_k & := \sum_{i=0}^{k} (-1)^{k-i} {n-i \choose k-i}f_{i-1} \\
        & = \sum_{i=0}^{k} (-1)^{k-i} {n-i \choose k-i}{n \choose i} i! \\
        & = \sum_{i=0}^{k} (-1)^{k-i} \frac{(n-i)!}{(k-i)!(n-k)!}\frac{n!}{(n-i)! i!} i! \\
        & = \left(\sum_{i=0}^{k} (-1)^{k-i} \frac{1}{(k-i)!}\right) \frac{n!}{(n-k)!} \label{eq1}
    \end{align}
On the other hand, we have that 
    \begin{align}
        d(k){n \choose k} & = \sum_{i=0}^{k} \frac{(-1)^{i}}{i!} k! \frac{n!}{(n-k)!k!} \\
        & = \left(\sum_{i=0}^{k} \frac{(-1)^{i}}{i!} \right) \frac{n!}{(n-k)!} \label{eq2}
    \end{align}
Hence, the expressions \eqref{eq1} and \eqref{eq2} are the same  after an index substitution. From Theorem \ref{lue-panov}, we have that $H^*(\ZZ_{\Gamma_n}) \simeq \bigoplus_{J \subset [n]} \tilde{H}^*(|\Gamma_{n_J}|)$. Therefore, 
\[
H^0(\ZZ_{\Gamma_n}) \simeq \tilde{H}^{-1}(|\Gamma_{n_\emptyset}|) \simeq \Z.
\] 
By Theorem \ref{theo-shell-inj}, $\Gamma_k \simeq \vee_{d(k)}S^{k-1}$. For any $k$, $1 < k \leq n$, there are exactly ${n \choose k}$ index sets of size $k$ in $[n]$. Therefore,
    \begin{align}
        H^{2k}(\ZZ_{\Gamma_n}) & \simeq \bigoplus_{k= |a| \subset [n]} \tilde{H}^{k-1}(|\Gamma_{n_a}|) \\ 
        & \simeq \bigoplus_{i = 1}^{{n \choose k}} \tilde{H}^{k-1}(\inj[k])\\
        & \simeq \bigoplus_{i = 1}^{{n \choose k}} \tilde{H}^{k-1}(\vee_{d(k)} S^{k-1})\\
        & \simeq \bigoplus_{i = 1}^{{n \choose k}} \left(\bigoplus_{j=1}^{d(k)}\tilde{H}^{k-1}(S^{k-1}) \right) \simeq \bigoplus_{i = 1}^{h_k} \tilde{H}^{k-1}(S^{k-1})  \label{fin-id}
    \end{align}
and all the rest of the cohomology groups are zero. The isomorphism \eqref{fin-id} follows from the equality $h_k = d(k){n \choose k}$. 
\end{proof}
	
\begin{corollary}
      $H^*(\ZZ_{\Gamma_3})\simeq H^*(S^4) \oplus H^*(S^4) \oplus H^*(S^4) \oplus H^*(S^6) \oplus H^*(S^6)$, where the isomorphism is a ring isomorphism. 
\end{corollary}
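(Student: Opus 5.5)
The plan is to combine the additive computation of Proposition \ref{add-co-inj} with a degree argument that forces all cup products of positive-degree classes to vanish. Throughout, the direct sum in the statement is read as the cohomology ring of $S^4\vee S^4\vee S^4\vee S^6\vee S^6$: the units of the summands are identified in degree $0$ and the reduced parts are added.

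\emph{Additive structure.} First I specialise Proposition \ref{add-co-inj} to $n=3$, using $h_k=d(k){n\choose k}$.
\begin{itemize}
\item For $k=2$: $d(2)=1$, so $h_2=1\cdot{3\choose 2}=3$.
\item For $k=3$: $d(3)=2$, so $h_3=2\cdot{3\choose 3}=2$.
\end{itemize}
This agrees with $\inj[2]\simeq S^1$ and $\inj[3]\simeq S^2\vee S^2$ from Theorem \ref{theo-shell-inj}. Hence, as graded $\Z$-modules,
\[
H^0(\ZZ_{\Gamma_3})\cong\Z,\qquad H^4(\ZZ_{\Gamma_3})\cong\Z^3,\qquad H^6(\ZZ_{\Gamma_3})\cong\Z^2,
\]
and all other groups vanish. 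The $k=1$ summands are zero because $\Gamma_1$ is a point, so $\tilde H^*(|\Gamma_{3_a}|)=0$ for $|a|=1$. These groups are exactly those of the wedge $S^4\vee S^4\vee S^4\vee S^6\vee S^6$.

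\emph{Ring structure.} Both rings are connected with $H^0=\Z$ acting as the unit, so it suffices to check that the product of any two positive-degree classes is zero on each side. For the wedge this is standard. For $\ZZ_{\Gamma_3}$, a product of a class in degree $p\geq 4$ with a class in degree $q\geq 4$ lies in degree $p+q\geq 8$. By the computation above, $H^{\geq 7}(\ZZ_{\Gamma_3})=0$, so every such product vanishes.

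Therefore I can define the isomorphism by sending $1\mapsto 1$ and choosing any bases of $H^4$ and $H^6$ mapped to the corresponding sphere classes. This map is multiplicative for degree reasons alone. No step is a real obstacle; the only point needing care is that the ring vanishing comes purely from the degree gap, so no knowledge of the cup product from \cite{lue-panov} is required.
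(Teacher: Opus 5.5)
Your proposal is correct and follows essentially the paper's argument: you get the additive structure from Proposition \ref{add-co-inj} ($H^4\cong\Z^3$, $H^6\cong\Z^2$), and then note that any cup product of positive-degree classes lands in degree $\geq 8$, where the cohomology vanishes. The only minor differences are that you obtain $h_2=3$, $h_3=2$ from the identity $h_k=d(k){n \choose k}$ instead of computing $\textbf{f}(\Gamma_3)=(3,6,6)$ and $\textbf{h}(\Gamma_3)=(1,0,3,2)$ directly from Definition \ref{dfn-f-h-vector}, and that you explicitly read the direct sum as the cohomology ring of the wedge, which is the correct interpretation.
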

\begin{proof}
    The first step is to calculate the $h$-vector of $\Gamma_3$. There are ${3 \choose k+1}(k+1)!$ simplicial $k$-cells in $\Gamma_3$ and none of higher dimension, hence $\textbf{f}(\Gamma_3) = (3,6,6)$. From formula in Definition \ref{dfn-f-h-vector} of the $h$-vector, we have 
    $\textbf{h}(\Gamma_3) = (1,0,3,2)$. Indeed, 
    \begin{align*}
     h_ 0 & = (-1)^{0} {3 \choose 0}f_{-1} = 1. \\ 
     h_1 & = \sum_{i=0}^{1} (-1)^{1-i} {3-i \choose 1-i}f_{i-1} = (-1) {3 \choose 1}f_{-1} + (1)^0 {2 \choose 0}f_{0} = -3 + 3 = 0. \\ 
     h_2 & = \sum_{i=0}^{2} (-1)^{2-i} {3-i \choose 2-i}f_{i-1} = (-1)^2 {3 \choose 2}f_{-1} + (-1) {2 \choose 1}f_{0} + (-1)^0 {1 \choose 0}f_{1} = 3 - 6 + 6 = 3. \\ 
     h_3 & = \sum_{i=0}^{3} (-1)^{3-i} {3-i \choose 3-i}f_{i-1} = (-1)^3 {3 \choose 3}f_{-1} + (-1)^2 {2 \choose 2}f_{0} + (-1)^1 {1 \choose 1}f_{1} + (-1)^0 {0 \choose 0}f_{2} = \\ &  = -1 + 3 - 6 + 6 = 2.
    \end{align*}
    
     Thus, the desired result follows from Proposition \ref{add-co-inj} and due to dimension argument, as there is no $8$-th or higher non-trivial cohomology that could imply a non-trival cup product.
\end{proof}

It is not immediately clear that all cup-products vanish for higher $n$. Before proceeding, we introduce the additional notation required for the next steps.

Suppose that we start with a space $Y$ and another space $X$ that we wish to attach to $Y$ by identifying the points in a subspace $A \subset X$ with points of $Y$. In order to so, we need a map $f: A \to Y$, for then we can form a quotient space of $X \sqcup_f Y$ by identifying each $a \in A$ with its image $f(a) \in Y$. We say that $Y$ is attached to $X$ along $A$ via $f$. The map $f$ is called the attaching map and the resulting space can be described as the pushout of the maps $i: A \to X$ and $f:A \to Y$. The following is a really well-known and useful property of a pair of spaces $(X,A)$ (in particular, pairs of $CW$-spaces) that has the homotopy extension property (HEP) (\cite[Chapter~0]{Hatcher}):

\begin{proposition}\label{hep-conse}\cite[Proposition~0.18]{Hatcher}
	Suppose $(X,A)$ has the HEP, then the homotopy type of $Y\cup_f X$ depends only on the homotopy class of the attaching map $f: A \to Y$ for any space $Y$ and $i:A \to X$ inclusion of subspace $A \subset X$. 
\end{proposition}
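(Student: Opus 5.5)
The plan is to follow the standard mapping-cylinder argument of \cite[Proposition~0.18]{Hatcher}. Let $f_0, f_1 : A \to Y$ be homotopic attaching maps and let $F : A \times I \to Y$ be a homotopy with $F|_{A\times\{0\}} = f_0$ and $F|_{A \times \{1\}} = f_1$. Rather than comparing $Y \cup_{f_0} X$ and $Y \cup_{f_1} X$ directly, I will introduce an intermediate space
\[
W := Y \cup_F (X \times I),
\]
obtained by attaching $X \times I$ to $Y$ along $A \times I$ via $F$. Both $Y \cup_{f_0} X$ and $Y \cup_{f_1} X$ sit inside $W$ as the subspaces $Y \cup_F (X \times \{0\})$ and $Y \cup_F (X \times \{1\})$. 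It then suffices to show that $W$ deformation retracts onto each of them, since then both are homotopy equivalent to $W$, and in fact the equivalence is rel $Y$.

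The key input is the reformulation of the HEP: $(X,A)$ has the HEP if and only if $X \times \{0\} \cup A \times I$ is a retract of $X \times I$. I will upgrade this to a \emph{deformation} retraction. Given a retraction $r : X \times I \to X \times\{0\} \cup A \times I$, set
\[
r_t(x,s) := \bigl(\pi_X r(x, (1-t)s),\; (1-t)s + t\,\pi_I r(x,s)\bigr)
\]
(or use the analogous explicit formula in \cite[Chapter~0]{Hatcher}). This gives a homotopy from $\id_{X\times I}$ to $r$ that is stationary on $X \times\{0\} \cup A \times I$. I will check stationarity using $r = \id$ there.

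This deformation retraction fixes $A \times I$ pointwise at all times, so it is compatible with the identifications made by $F$. Extending it by the identity on $Y$ therefore yields a deformation retraction of $W$ onto $Y \cup_F (X \times\{0\} \cup A\times I) = Y \cup_{f_0} X$, where the last equality holds because the strip $A \times I$ is absorbed into $Y$. By symmetry, using $1-s$ in the $I$-coordinate, $W$ also deformation retracts onto $Y \cup_{f_1} X$.

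The step I expect to need the most care is continuity of the induced homotopy on the quotient $W \times I$. A homotopy defined on $(Y \sqcup X\times I)\times I$ that respects identifications descends only if $q \times \id_I$ is a quotient map, where $q : Y \sqcup X \times I \to W$ is the quotient map. I will settle this by invoking that the product of a quotient map with the locally compact Hausdorff space $I$ is again a quotient map. Alternatively, in the compactly generated $CW$ setting the paper works in, this holds automatically. The remaining verifications, namely that the deformation restricts to the identity on $Y$ and ends in the correct subspace, are routine.
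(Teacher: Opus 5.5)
Your overall route matches the cited result (Hatcher's Proposition~0.18, which the paper quotes without proof). You form $W=Y\cup_F(X\times I)$ and deformation retract it, rel $Y$, onto both ends. Your added step is upgrading the retraction supplied by the HEP to a deformation retraction. That step is exactly what the statement as phrased here requires, since Hatcher assumes a CW pair and takes the deformation retraction from his Proposition~0.16.

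However, the formula you display for that step is wrong. With $(1-t)s$ in the first slot:
\begin{itemize}
\item at $t=0$ you get $r_0(x,s)=(\pi_X r(x,s),s)$, which is not $(x,s)$;
\item at $t=1$ you get $r_1(x,s)=(x,\pi_I r(x,s))$, which is neither $r$ nor, in general, a point of $X\times\{0\}\cup A\times I$.
\end{itemize}
Only the stationarity check survives. For instance, take $X=[0,1]$, $A=\{0\}$, and $r$ the radial projection from $(2,2)$ onto the two edges. Then $r_0(1/2,1)=(0,1)$ and $r_1(1/2,1)=(1/2,2/3)$. The repair is to run the first coordinate forward in time:
\[
H_t(x,s)=\bigl(\pi_X r(x,ts),\;(1-t)s+t\,\pi_I r(x,s)\bigr).
\]
Now $H_0=\id$ because $r(x,0)=(x,0)$, and $H_1=r$. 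Also $H_t$ fixes $X\times\{0\}\cup A\times I$ pointwise, because $r$ is the identity there and $(a,ts)\in A\times I$. With this correction, the rest of your argument goes through: descending through $q\times\id_I$, the symmetry $s\mapsto 1-s$, and the rel $Y$ conclusion.

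A smaller point-set remark concerns identifying $Y\cup_F(X\times\{0\}\cup A\times I)$ with $Y\cup_{f_0}X$ as spaces, not just as sets. This uses that $A$ is closed in $X$. Then the pasting lemma applies to the closed pieces $X\times\{0\}$ and $A\times I$. The restriction of $q$ to the closed saturated set $Y\sqcup(X\times\{0\}\cup A\times I)$ is then still a quotient map. Closedness is automatic for the CW pairs used in the paper. More generally it holds whenever $X$ is Hausdorff, since then the HEP forces $A$ to be closed.
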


Pushouts of ordered simplicial complexes induce homotopy pushouts of polyhedral products.  This is known to be true for simplicial complexes \cite[Proposition~3.8]{toric-homotopy} and we extended it to ordered simplicial complexes. Suppose there is a pushout of ordered simplicial complexes
\[
\begin{tikzcd}
	I \arrow[r, "i"] \arrow[d] & K_2 \arrow[d] \\
	K_1 \arrow[r]  & K 
\end{tikzcd}
\]
where $K := K_1 \cup_I K_2$ is an ordered simplicial complex on $[n]$ vertices. In order to compare the polyhedral products over $L,K_1,K_2,K$ we should really consider each of the four simplicial complexes on the same vertex set $[m]$, as the polyhedral product construction depends on the vertex set of the underlying poset category. To handle this situation, we need the concept of \emph{ghost vertices}. A vertex $i$ is a ghost vertex of an ordered simplicial complex $K$ on $[n]$ vertices if $\{i\}\not\in K$ but $i \in [n]$. This notion allows us to work with multiple ordered simplicial complexes having different numbers of vertices by embedding them all into the same vertex set $[n]$, where we augment each complex by adding its missing vertices as ghost vertices. Denote by $\bar{I},\bar K_1$ and $\bar K_2$ the face posets associated to $L,K_1$ and $K_2$, viewed as ordered simplicial complexes on the vertex set $[n]$ after adding ghost vertices.

\begin{theorem}\cite[Theorem~8.6.1]{cubical-homotopy}\label{cofibration-homotopy-colimit}
    Given a functor $F: \mathcal{I} \to \Top$, where $\Top$ represents the category of compactly generated spaces. The inclusion $\mathcal{J} \to \mathcal{I}$ of a subcategory $\mathcal{J}$ induces a cofibration 
    \[\hocolim_\mathcal{J} F \to \hocolim_\mathcal{I} F.\]
\end{theorem}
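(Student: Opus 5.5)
The statement to prove is that, for a functor $F:\mathcal{I}\to\Top$ and a subcategory $\mathcal{J}\subseteq\mathcal{I}$, the induced map $\hocolim_\mathcal{J} F\to\hocolim_\mathcal{I} F$ is a cofibration. I would work directly with the standard bar-construction model of the homotopy colimit. This realizes it as the coend
\[
\hocolim_\mathcal{I} F=\coprod_{k\ge 0}\ \coprod_{i_0\to\cdots\to i_k} F(i_0)\times\Delta^k\Big/\sim,
\]
that is, as the geometric realization of the simplicial space $B_\bullet(*,\mathcal{I},F)$ whose $k$-simplices are $\coprod_{i_0\to\cdots\to i_k}F(i_0)$. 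Since $\mathcal{J}$ is a subcategory, every string of composable morphisms in $\mathcal{J}$ is also a string in $\mathcal{I}$. Hence $B_\bullet(*,\mathcal{J},F|_\mathcal{J})$ is a simplicial subspace of $B_\bullet(*,\mathcal{I},F)$: in each simplicial degree it is the inclusion of a union of some of the coproduct summands. The faces and degeneracies restrict because composites and identities of $\mathcal{J}$-arrows stay in $\mathcal{J}$.

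\textbf{Step 1.} Show that in each degree the inclusion $B_k(*,\mathcal{J},F)\to B_k(*,\mathcal{I},F)$ is a closed cofibration. This is immediate, since the inclusion of a coproduct summand $X\to X\sqcup Y$ is a closed cofibration.

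\textbf{Step 2.} Show that both simplicial spaces are Reedy cofibrant (``proper''), meaning the latching maps $L_kB\to B_k$ are closed cofibrations, and that the map between them is a Reedy cofibration. The degenerate $k$-simplices are exactly the summands indexed by strings containing an identity arrow. Such a string is determined uniquely by the nondegenerate string obtained by deleting the identities. So $B_k$ splits as (degenerate summands)$\,\sqcup\,$(nondegenerate summands), and the latching map is the inclusion of a union of summands, hence a cofibration. The relative latching map
\[
B_k^\mathcal{J}\cup_{L_kB^\mathcal{J}}L_kB^\mathcal{I}\to B_k^\mathcal{I}
\]
is likewise an inclusion of a union of summands, because a string lies in $\mathcal{J}$ exactly when all its arrows do. No well-pointedness of the values $F(i)$ is needed here, only the free form of the degeneracies.

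\textbf{Step 3.} Conclude by the standard fact that geometric realization carries Reedy cofibrations between proper simplicial spaces to closed cofibrations. I would argue this through the skeletal filtration. The $k$-skeleton of the realization is obtained from the $(k-1)$-skeleton by a pushout along
\[
B_k\times\partial\Delta^k\cup L_kB\times\Delta^k\to B_k\times\Delta^k,
\]
where the map is a pushout-product of cofibrations and hence a cofibration. The relative version shows that $|B^\mathcal{J}|\cup\mathrm{sk}_k|B^\mathcal{I}|\to|B^\mathcal{J}|\cup\mathrm{sk}_{k+1}|B^\mathcal{I}|$ is a cofibration for each $k$. A sequential colimit of closed cofibrations in compactly generated spaces is again a closed cofibration, which finishes the argument.

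The main obstacle is Step 3, specifically the point-set care: pushout-products of closed cofibrations remain closed cofibrations in compactly generated spaces, and closedness is preserved in the colimit. I would handle this via the NDR-pair characterization (Steenrod/Strøm), checking that NDR data on the summand inclusions extend skeleton by skeleton. Alternatively, and more concretely, one could note that $\hocolim_\mathcal{I}F$ is built from $\hocolim_\mathcal{J}F$ by attaching the cells $F(i_0)\times\Delta^k$ for strings not in $\mathcal{J}$, each attached along $F(i_0)\times\partial\Delta^k$, which is a relative-CW-type construction and hence a cofibration.
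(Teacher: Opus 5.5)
Your proof is correct. The paper gives no argument of its own for this statement: it quotes it from Munson--Voli\'c and uses it only as a black box, to know that the maps $\ZZ_{\bar I}\to\ZZ_{\bar K_j}$ in Proposition~\ref{induce-pushout} are cofibrations. So the paper gains brevity by citation, and your proposal supplies a self-contained justification.

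The heart of your proof is your closing remark. The simplicial replacement $B_\bullet(*,\mathcal I,F)$ has free degeneracies, and $N\mathcal J\subseteq N\mathcal I$ is a simplicial subset. Hence $\hocolim_{\mathcal I}F$ is obtained from $\hocolim_{\mathcal J}F$ by attaching, in order of increasing $k$, a copy of $F(i_0)\times\Delta^k$ along $F(i_0)\times\partial\Delta^k$ for each nondegenerate string $i_0\to\cdots\to i_k$ not lying in $\mathcal J$.

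Stated this way, the Reedy machinery and Str\o m's pushout-product theorem become unnecessary. Since $B_k$ is the disjoint union of the degenerate summands $L_kB$ and the nondegenerate ones $N_kB$, the pushout-product in Step~3 is the identity on $L_kB\times\Delta^k$ together with $N_kB\times(\partial\Delta^k\hookrightarrow\Delta^k)$. You then only need three facts: a space times $\partial\Delta^k\hookrightarrow\Delta^k$ is a closed cofibration, and closed cofibrations are stable under pushouts and under sequential composites. This formulation also shows directly why no hypotheses are needed, neither fullness of $\mathcal J$ nor any cofibrancy of the $F(i)$.

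Two points in Step~2 need tightening.
\begin{itemize}
\item A degenerate string is not determined by the nondegenerate string obtained by deleting identities alone. It is determined by that string \emph{together with} the positions of the deleted identities (Eilenberg--Zilber). Identifying $L_kB\to B_k$ with the inclusion of the degenerate summands then also uses that each degeneracy maps a summand $F(i_0)$ identically onto a summand $F(i_0)$, since inserting an identity never changes the source $i_0$.
\item The relative latching map is a summand inclusion because $N_k\mathcal J\cap\mathrm{Deg}_k(N\mathcal I)=\mathrm{Deg}_k(N\mathcal J)$. This holds because a subcategory contains the identities of its objects.
\end{itemize}
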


\begin{proposition}\cite[Proposition~3.6.17]{cubical-homotopy}\label{hompushout-pushout}
    Consider the diagram 
    \[
    X \xleftarrow{f} W \xrightarrow{g} Y. 
    \]
    If $f$ or $g$ is a cofibration, the natural map $\hocolim( X \xleftarrow{f} W \xrightarrow{g} Y) \to \colim( X \xleftarrow{f} W \xrightarrow{g} Y)$
    is a homotopy equivalence.  
\end{proposition}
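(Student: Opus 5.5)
The plan is to identify the homotopy colimit of the span $X \xleftarrow{f} W \xrightarrow{g} Y$ with its standard model, the double mapping cylinder
\[
M(f,g) = X \cup_{f} (W \times [0,1]) \cup_{g} Y,
\]
where $W\times\{0\}$ is glued to $X$ via $f$ and $W\times\{1\}$ is glued to $Y$ via $g$. The natural map $M(f,g) \to X\cup_W Y$ collapses each segment $\{w\}\times[0,1]$ to the class of $f(w) \sim g(w)$. By symmetry I would assume $f$ is a cofibration; the case where $g$ is a cofibration is the same with the roles of $X$ and $Y$ exchanged.

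First I will split the double mapping cylinder as a pushout. Let $M_f = X \cup_f (W\times[0,1])$ be the mapping cylinder of $f$, and let $j: W \to M_f$ be the inclusion $w \mapsto (w,1)$. Then $M(f,g)$ is the pushout of $M_f \xleftarrow{j} W \xrightarrow{g} Y$, while $X\cup_W Y$ is the pushout of $X \xleftarrow{f} W \xrightarrow{g} Y$. The collapse $r: M_f \to X$ satisfies $r\circ j = f$. Hence the triple $(r,\mathrm{id}_W,\mathrm{id}_Y)$ is a map of spans, and the induced map on pushouts is exactly the natural map $\hocolim \to \colim$.

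Second, I check the hypotheses of the gluing lemma (Brown's gluing theorem, the cofibration version). The lemma says that a map of spans whose left legs are both cofibrations and whose three components are homotopy equivalences induces a homotopy equivalence of pushouts. Here $\mathrm{id}_W$ and $\mathrm{id}_Y$ are trivially equivalences. The map $r$ is a homotopy equivalence, because $M_f$ deformation retracts onto $X$. The leg $f$ is a cofibration by hypothesis. The leg $j$ is a cofibration because $W\times\{1\}\hookrightarrow W\times[0,1]$ is one and cofibrations are preserved under pushout along $W\times\{0\}\to X$; alternatively, $(M_f, W\times\{1\})$ is an NDR pair in our CW setting.

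The main obstacle is justifying the gluing lemma itself, or bypassing it, since Proposition \ref{hep-conse} only addresses changing the attaching map up to homotopy. If I must argue directly, I would strengthen the equivalence to one \emph{under} $W$. Since $f$ is a cofibration, $M_f \simeq X$ relative to the copy of $W$: the HEP for $(X,W)$ lets one deform the standard retraction to a homotopy equivalence compatible with $j$ and $f$. Gluing $Y$ along $W$ on both sides then transports this equivalence under $W$ to an equivalence $M(f,g)\simeq X\cup_W Y$. This is the standard fact that pushout along a cofibration preserves homotopy equivalences under the source. Finally I would verify that the resulting equivalence agrees up to homotopy with the natural collapse map, which is clear because both are induced by $r$.
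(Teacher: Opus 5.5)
Your proposal is correct. The paper gives no argument of its own here: it quotes the statement from \cite[Proposition~3.6.17]{cubical-homotopy} and uses it as a black box in the proof of Proposition \ref{induce-pushout}. So you are supplying a proof the paper outsources, and your reduction is the standard one: write the double mapping cylinder as $M_f\cup_W Y$ and compare it with $X\cup_W Y$ via the map of spans $(r,\mathrm{id}_W,\mathrm{id}_Y)$.

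Of your two ways to finish, the gluing-lemma route is shorter to write but just relocates the work into Brown's gluing theorem. The direct route is self-contained and needs less than you use:
\begin{itemize}
\item Extend $(w,t)\mapsto[w,t]$ to $\tilde K\colon X\times[0,1]\to M_f$, starting at the standard inclusion $X\to M_f$; this uses only the HEP of $f$. Set $s=\tilde K_1$, so that $sf=j$.
\item Then $r\tilde K$ is a homotopy $\mathrm{id}_X\simeq rs$ that is stationary on $f(W)$.
\item Define a homotopy on $M_f$ by $\tilde K$ on $X\times[0,1]$ and by $([w,u],t)\mapsto[w,u+t(1-u)]$ on the cylinder. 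The two pieces agree at $u=0$, and this is a homotopy $\mathrm{id}_{M_f}\simeq sr$ that is stationary on $j(W)$.
\item Both homotopies extend by the constant homotopy on $Y$. Hence $r\cup\mathrm{id}_Y$, which is exactly the collapse map, is a homotopy equivalence.
\end{itemize}
Note that it is the inclusion $X\to M_f$ that gets deformed, not the retraction $r$, which already satisfies $rj=f$. Note also that cofibrancy of $j$ is never used on this route.

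If you keep the gluing-lemma route, fix the justification that $j$ is a cofibration. The inclusion $W\times\{0,1\}\hookrightarrow W\times[0,1]$ is a cofibration. Its pushout along $f\sqcup\mathrm{id}_W$ is $X\sqcup W\to M_f$, and $j$ is the restriction of this map to the summand $W$. As written, ``pushout along $W\times\{0\}\to X$'' does not literally apply to $W\times\{1\}\hookrightarrow W\times[0,1]$.
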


\begin{proposition}\label{induce-pushout}
	Let $K$ be a finite ordered simplicial complex on the vertex set $[m]$. Suppose that there is a pushout of ordered simplicial complexes 
	\[
	\begin{tikzcd}
		I \arrow[r] \arrow[d] & K_2 \arrow[d] \\
		K_1 \arrow[r]  & K. 
	\end{tikzcd}
	\]
	Denote by $K, \bar I, \bar K_1, \bar K_2$ the associated face posets on the same vertex set of $K$. Let $\textbf{(X,A)} = \{(X_i,A_i)\}_{i=1}^m$ be a collection of pairs of pointed $CW$-complexes and consider the polyhedral products spaces $\ZZ_{\bar I}\textbf{(X,A)}$, $\ZZ_{\bar K_2}\textbf{(X,A)}$, $\ZZ_{\bar K_1}\textbf{(X,A)}$ and $\ZZ_{K}\textbf{(X,A)}$. Then, the diagram below
	\[
	\begin{tikzcd}
		\ZZ_{\bar I}\textbf{(X,A)} \arrow[r] \arrow[d] & \ZZ_{\bar K_2}\textbf{(X,A)} \arrow[d] \\
		\ZZ_{\bar K_1}\textbf{(X,A)} \arrow[r]  & \ZZ_{K}\textbf{(X,A)},
	\end{tikzcd}
	\] 
	is a homotopy pushout of polyhedral products. By the homotopy equivalence in \ref{union-equiv-levi}, namely $\ZZ_{K}\textbf{(X,A)} \simeq \bigcup_{\sigma \in K} Z_\sigma$, we conclude that $\ZZ_{K}\textbf{(X,A)} \simeq \ZZ_{{\bar K_1}}\textbf{(X,A)}\cup_{\ZZ_{\bar I}\textbf{(X,A)}} \ZZ_{\bar K_2}\textbf{(X,A)}$.  
\end{proposition}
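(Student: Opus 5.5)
The plan is to reduce everything to the union description \eqref{union-equiv-levi} and then recognise the polyhedral product over $K$ as a genuine pushout of subspaces in which one leg is a cofibration. First, since the face poset of $K$ is the union of the face posets of $K_1$ and $K_2$ along that of $I$ (this is what it means for the square of ordered simplicial complexes to be a pushout), and all four are regarded on the common vertex set $[m]$ via ghost vertices, each $\ZZ_\sigma\textbf{(X,A)}$ for $\sigma\in\bar K_j$ is literally the same subspace of $\prod_{i=1}^m X_i$ as for $\sigma \in K$. Note that for a ghost vertex $i$ of $\bar K_j$ the factor is always $A_i$, consistent with $V_{\bar K_j}(\sigma)=V_K(\sigma)$. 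Hence by \eqref{union-equiv-levi} we get
\[
\ZZ_{\bar K_j}\textbf{(X,A)}\simeq U_j:=\bigcup_{\sigma\in \bar K_j}\ZZ_\sigma,\qquad \ZZ_{\bar I}\textbf{(X,A)}\simeq U_I:=\bigcup_{\sigma\in \bar I}\ZZ_\sigma,
\]
and likewise $\ZZ_K\textbf{(X,A)}\simeq U:=\bigcup_{\sigma\in K}\ZZ_\sigma$. These equivalences are natural with respect to the inclusions of posets, since the maps hocolim $\to$ colim are natural.

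Second, I will check that $U=U_1\cup U_2$ and $U_1\cap U_2=U_I$. The first identity is immediate from $K=K_1\cup K_2$. The second is the key combinatorial step. A point of $U_1\cap U_2$ lies in $\ZZ_{\sigma}\cap\ZZ_{\tau}$ with $\sigma\in K_1$ and $\tau\in K_2$. By the arrangement property (Kishimoto--Levi), this intersection is a union of $\ZZ_\rho$ with $\rho\le\sigma$ and $\rho\le\tau$. Such a $\rho$ is a face of both $\sigma$ and $\tau$, and since the pushout of complexes identifies exactly along $I$, any common face of a simplex in $K_1$ and a simplex in $K_2$ must lie in $I$. Thus $U_1\cap U_2\subseteq U_I$, and the reverse inclusion is trivial. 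I expect this step to be the main obstacle. Unlike the simplicial-complex case, $\ZZ_\sigma$ depends only on $V(\sigma)$, so distinct simplices with the same vertex set (e.g.\ $12$ and $21$) give the same subspace. I will therefore argue on the level of vertex sets: $\ZZ_\sigma\cap\ZZ_\tau=\ZZ$ of $V(\sigma)\cap V(\tau)$, and I need $V(\sigma)\cap V(\tau)$ to be the vertex set of some face in $I$. I would try to obtain this from the subposets of $\sigma$ and $\tau$ on $V(\sigma)\cap V(\tau)$, which are faces lying in $K_1\cap K_2=I$.

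Third, the square $U_I\to U_2$, $U_I\to U_1$ with colimit $U_1\cup_{U_I}U_2=U$ is a strict pushout of subspaces. Because $U_I\subseteq U_1$ is an inclusion of unions of CW subcomplexes of the product CW structure, it is a cofibration; alternatively, Theorem \ref{cofibration-homotopy-colimit} applied to $\bar I\subseteq\bar K_1$ gives this directly on homotopy colimits. Proposition \ref{hompushout-pushout} then shows that the homotopy pushout maps by an equivalence to $U$. Transporting back along the natural equivalences of the first step yields that the square of polyhedral products is a homotopy pushout, and $\ZZ_K\textbf{(X,A)}\simeq\ZZ_{\bar K_1}\textbf{(X,A)}\cup_{\ZZ_{\bar I}\textbf{(X,A)}}\ZZ_{\bar K_2}\textbf{(X,A)}$.
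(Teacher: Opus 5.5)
Your outline (union description, compute the intersection, note that one leg is a cofibration, apply Proposition \ref{hompushout-pushout}) matches the paper's. The gap is that you set it up inside $\prod_{i=1}^m X_i$, and there it fails exactly in the ordered case this proposition exists for. A union of literal subspaces of $\prod X_i$ does not model the polyhedral product of an ordered simplicial complex. For $\Gamma_2$ and $(D^2,S^1)$, the union of the $\ZZ_\sigma$ is $D^2\times D^2\simeq *$. By contrast, $\colim_{\Gamma_2}\ZZ_\sigma$ glues two separate copies of $D^2\times D^2$ (one for $12$, one for $21$) along $D^2\times S^1\cup S^1\times D^2$, giving $S^4$. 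So your first step is already wrong.

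Your second step also cannot be repaired on vertex sets. Take $K_1=\{\emptyset,1,2,12\}$, $K_2=\{\emptyset,1,2,21\}$ and $I=\{\emptyset,1,2\}$, so that $K=\Gamma_2$; this is precisely the situation of Proposition \ref{push-glue}. For $\sigma=12$ and $\tau=21$, the face of $\sigma$ on $V(\sigma)\cap V(\tau)=\{1,2\}$ is $12\notin K_2$, and the face of $\tau$ is $21\notin K_1$. No face of $I$ has vertex set $\{1,2\}$. Indeed $U_1\cap U_2=X_1\times X_2\neq X_1\times A_2\cup A_1\times X_2=U_I$. In the product, even your arrangement claim fails: $\ZZ_{12}\cap\ZZ_{21}=\ZZ_{12}$, yet $12\not\le 21$.

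The missing idea is that the union in \eqref{union-equiv-levi} must be taken in $\colim_K\ZZ_\sigma$. There every simplex has its own copy of $\ZZ_\sigma$, and copies are glued only along common faces; only then can the arrangement have inclusion poset isomorphic to $K$. In that model your first argument is correct. The copies for $\sigma\in\bar K_1$ and $\tau\in\bar K_2$ meet in the union of the copies for $\rho\le\sigma,\tau$. Such $\rho$ lie in $\bar K_1\cap\bar K_2=\bar I$, because both subposets are down-closed. This is exactly what the paper's unproved assertion $\ZZ_{\bar I}\simeq\ZZ_{\bar K_1}\cap\ZZ_{\bar K_2}$ amounts to.

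More directly, you can avoid $\prod X_i$ altogether. Every chain $\sigma_0<\dots<\sigma_k$ in $K$ lies in $\bar K_1$ or in $\bar K_2$, whichever contains $\sigma_k$. Hence $\hocolim_K\ZZ_\sigma$ is the union of the closed subspaces $\hocolim_{\bar K_1}\ZZ_\sigma$ and $\hocolim_{\bar K_2}\ZZ_\sigma$, which intersect in $\hocolim_{\bar I}\ZZ_\sigma$. This strict pushout is along cofibrations (Theorem \ref{cofibration-homotopy-colimit}), so it is a homotopy pushout by Proposition \ref{hompushout-pushout}.
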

\begin{proof}[Proof of Proposition \ref{induce-pushout}]
	 The inclusions $I \to K_1$ and $I \to K_2$ of ordered simplicial complexes induce the inclusions of polyhedral products $\ZZ_{\bar I}\textbf{(X,A)} \to \ZZ_{\bar K_2}\textbf{(X,A)}$ and $\ZZ_{\bar I}\textbf{(X,A)} \to \ZZ_{\bar K_2}\textbf{(X,A)}$, where the polyhedral products are considered as homotopy colimits. These inclusions are cofibrations by Theorem \ref{cofibration-homotopy-colimit}. Therefore, the canonical projection map  
	\[\begin{tikzcd}
		{\hocolim(\ZZ_{\bar K_1}\textbf{(X,A)} \leftarrow  \ZZ_{\bar I}\textbf{(X,A)} \rightarrow \ZZ_{\bar K_2}\textbf{(X,A)})} \\
		\\
		{\colim(\ZZ_{\bar K_1}\textbf{(X,A)} \leftarrow  \ZZ_{\bar I}\textbf{(X,A)} \rightarrow \ZZ_{\bar K_2}\textbf{(X,A)})}
		\arrow[from=1-1, to=3-1]
	\end{tikzcd}\]
	is a homotopy equivalence by Proposition \ref{hompushout-pushout}. Consequently, the pushout of the diagram 
	\[
	\begin{tikzcd}
		\ZZ_{\bar I}\textbf{(X,A)} \arrow[r] \arrow[d] & \ZZ_{\bar K_2}\textbf{(X,A)} \\
		\ZZ_{\bar K_1}\textbf{(X,A)}
	\end{tikzcd}
	\] 
    can be considered a homotopy pushout (up to homotopy). The rest of the proof follows using the arguments presented in \cite[Proposition~3.8]{toric-homotopy}. Since $K = K_1 \cup_I K_2$ and $K$ is a finite ordered simplicial complex, the simplices in $K$ can be divided into three finite collections: 
	\begin{itemize}
		\item the set $B$ containing the simplices of $I$;
		\item the set $C$ containing the simplices of $K_1$ that are not simplices of $I$;
		\item the set $D$ containing the simplices of $K_2$ that are not simplices of $I$.
	\end{itemize}
	We have 
	\begin{itemize}
		\item $I := \bigcup_{\sigma \in B} \sigma$;
		\item $K_1 := (\bigcup_{\sigma' \in C} \sigma') \cup (\bigcup_{\sigma \in B} \sigma)$;
		\item $K_2 := (\bigcup_{\sigma'' \in D} \sigma'') \cup (\bigcup_{\sigma \in B} \sigma)$;
		\item $K := (\bigcup_{\sigma' \in C} \sigma') \cup (\bigcup_{\sigma'' \in D} \sigma'') \cup (\bigcup_{\sigma \in B} \sigma)$.
	\end{itemize}
	From \cite[Proposition~5.6]{kishimoto2019polyhedral} we have the equivalence described in equation \ref{union-equiv-levi}, which implies that the polyhedral product $\ZZ_\PP\textbf{(X,A)}$ is homotopy equivalent to $\bigcup_{\sigma \in \PP} Z_\sigma$ for any face poset $\PP$ of an ordered simplicial complex on the vertex set $[m]$.  Hence, in our case, we have 
	\begin{itemize}
		\item $\ZZ_{\bar I}\textbf{(X,A)} \simeq \bigcup_{\sigma \in B} Z_\sigma$,
		\item $\ZZ_{\bar K_1}\textbf{(X,A)} \simeq (\bigcup_{\sigma' \in C} Z_{\sigma'}) \cup (\bigcup_{\sigma \in B} Z_\sigma)$,
		\item $\ZZ_{\bar K_2}\textbf{(X,A)} \simeq (\bigcup_{\sigma'' \in D} Z_{\sigma''}) \cup (\bigcup_{\sigma \in B} Z_\sigma)$ and 
		\item $\ZZ_K\textbf{(X,A)} \simeq (\bigcup_{\sigma' \in C} Z_{\sigma'}) \cup (\bigcup_{\sigma \in B} Z_\sigma) \cup (\bigcup_{\sigma'' \in D} Z_{\sigma''})$.
	\end{itemize}
	In particular, since $\ZZ_{\bar I}\textbf{(X,A)} \simeq \ZZ_{\bar K_1}\textbf{(X,A)} \cap \ZZ_{\bar K_2}\textbf{(X,A)}$, we have that 
	\[\ZZ_{K}\textbf{(X,A)} \simeq \ZZ_{\bar K_1}\textbf{(X,A)} \cup_{\ZZ_{\bar I}\textbf{(X,A)}} \ZZ_{\bar K_2}\textbf{(X,A)},\] which implies the existence of the desired homotopy pushout. 
\end{proof}

\begin{proposition}\label{prop-pute-boundary-momentangle}
Let $I$ be a pure (i.e., all facets have same dimension) simplicial complex which is a codimension $1$ subcomplex of the boundary of the $(n-1)$-dimensional simplex $\Delta^{n-1}$. Assume further that $I$ contains all the $n$ vertices of $\Delta^{n-1}$. Then, $\ZZ_I \simeq S^{2|I|-1}$, where $|I|$ is the number of facets in $I$.
\end{proposition}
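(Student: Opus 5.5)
The plan is to identify $I$ combinatorially as a join, and then use the product behaviour of polyhedral products over joins.

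\textbf{Step 1: identify $I$.} Every facet of $\partial\Delta^{n-1}$ has the form $[n]\setminus\{j\}$ for some $j\in[n]$. Since $I$ is pure of codimension $1$ in $\partial\Delta^{n-1}$, it is the union of the facets $[n]\setminus\{j\}$ for $j$ ranging over some set $S\subseteq[n]$ with $|S|=|I|=:m$. A subset $\sigma\subseteq[n]$ lies in $I$ if and only if $\sigma\subseteq[n]\setminus\{j\}$ for some $j\in S$, that is, if and only if $S\not\subseteq\sigma$. Hence
\[
I=\partial\Delta^{S}*\Delta^{[n]\setminus S},
\]
the join of the boundary of the full simplex on $S$ with the full simplex on the remaining $n-m$ vertices. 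The hypothesis that $I$ contains every vertex is used exactly here. If $m=1$, the vertex $j\in S$ would not lie in $I$, so necessarily $m\ge 2$. Then $\partial\Delta^S$ is a genuine complex on the vertex set $S$ with no ghost vertices, and $\ZZ_I$ is computed on the vertex set $[n]$ as stated.

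\textbf{Step 2: split off the contractible factor.} Here I use the standard product formula $\ZZ_{K*L}\textbf{(X,A)}=\ZZ_K\textbf{(X,A)}\times\ZZ_L\textbf{(X,A)}$. It can be read off from the union description \eqref{union-equiv-levi}: since the faces of $K*L$ are exactly the $\sigma\sqcup\tau$ with $\sigma\in K$ and $\tau\in L$, we have $Z_{\sigma\sqcup\tau}=Z_\sigma\times Z_\tau$. The union over all faces then factors as the product of the two unions. This gives
\[
\ZZ_I\simeq \ZZ_{\partial\Delta^{S}}\times\ZZ_{\Delta^{[n]\setminus S}}=\ZZ_{\partial\Delta^{S}}\times (D^2)^{n-m}\simeq \ZZ_{\partial\Delta^{S}}.
\]

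\textbf{Step 3: the boundary of a simplex.} By \eqref{union-equiv-levi}, $\ZZ_{\partial\Delta^{m-1}}$ is the union inside $(D^2)^m$ of the subsets in which at least one coordinate lies in $S^1$. This union is precisely the topological boundary $\partial\big((D^2)^m\big)$. Since $(D^2)^m\cong D^{2m}$, this boundary is $S^{2m-1}$. Therefore $\ZZ_I\simeq S^{2m-1}=S^{2|I|-1}$.

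The main point needing care is Step 1: recognising that a union of facets of $\partial\Delta^{n-1}$ is exactly the join $\partial\Delta^S*\Delta^{[n]\setminus S}$, and tracking the role of the vertex hypothesis. Steps 2 and 3 are routine consequences of the union model \eqref{union-equiv-levi}.
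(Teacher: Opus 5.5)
Your proof is correct and is essentially the paper's argument. Both identify $\ZZ_I$ with the union $\{z\in(D^2)^n : z_j\in S^1 \text{ for some } j\in S\}=\partial\big((D^2)^S\big)\times(D^2)^{[n]\setminus S}$ and then contract the disc factor to obtain $S^{2|S|-1}$; your join decomposition $I=\partial\Delta^S*\Delta^{[n]\setminus S}$ makes explicit which $D^2$ factors are the ``extra'' ones the paper contracts, and where the vertex hypothesis is used (it forces $|S|\ge 2$), both of which the paper's sketch leaves implicit.
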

\begin{proof}
	The set of facets of $I$ is cofinal in the face poset of $I$, hence \[\ZZ_I := \ZZ_{I}(D^2,S^1)= \bigcup_{\sigma \in I} \ZZ_\sigma(D^2,S^1) = \bigcup_{\sigma \in I} (\prod_{i \in V(\sigma)} D^2 \times \prod_{i \not \in V(\sigma)} S^1),\] where the union is taken over the facets of $I$ \cite[Lemma~2.1]{shifted-GRBIC20136}. By hypothesis, the vertex set of $I$ is $[n] = \{1, \ldots, n\}$ and the facets are of codimension $1$, that is, they contain all the vertices of the vertex set but one.
	Therefore, each $\ZZ_\sigma(D^2,S^1)$ is equal to $(\prod_{i = 1}^{n-1} D^2)\times S^1$ for every facet $\sigma$ of $I$, up to reordering the components. The boundary of the cartesian product of closed topological spaces distributes over the product, that is, $\partial (X_1 \times \ldots \times X_i \times \ldots \times X_n) =  (\partial X_1 \times \ldots \times X_i \times \ldots \times X_n) \cup (X_1 \times \ldots \times \partial X_i \times \ldots \times X_n) \cup (X_1 \times \ldots \times X_i \times \ldots \times \partial X_n)$. Thus, there is a homotopy equivalence $\ZZ_I \simeq S^{2|I|-1}$ obtained by contracting the extras $D^2$'s in the product corresponding to each $\ZZ_\sigma(D^2,S^1)$.
\end{proof}

\begin{proposition}\label{push-glue}
	Let $K_1,K_2$ be two $(d-1)$-dimensional simplices $\Delta^{d-1}$ on same vertex set $[d]$ and $I$ be a pure codimension $1$ subcomplex formed by a non-empty set of facets of the boundary of $K_1$ (and $K_2$). Assume further that $I$ contains all the vertices $[d]$ of $\Delta^{d-1}$. Let $K = K_1 \cup_I K_2$ be the ordered simplicial complex obtained as pushout of $K_1$ attached to $K_2$ along $i: I \to K_2$ (i.e. attaching $I$ identically to the corresponding facets of $\partial K_2$). Then, $\ZZ_K \simeq S^{2|I|}$, where $|I|$ is the number of facets of $I$. 
\end{proposition}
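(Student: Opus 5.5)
We will apply Proposition \ref{induce-pushout} to the pushout $K = K_1 \cup_I K_2$. All three complexes $I$, $K_1$, $K_2$ already live on the full vertex set $[d]$, because $I$ contains every vertex. So no ghost vertices are needed, and we get a homotopy pushout
\[
\ZZ_K \simeq \ZZ_{K_1} \cup_{\ZZ_I} \ZZ_{K_2}.
\]
Since $K_1$ and $K_2$ are full simplices $\Delta^{d-1}$ on $[d]$, each has a unique maximal face with vertex set $[d]$. Hence $\ZZ_{K_j} = \prod_{i=1}^d D^2 = D^{2d}$ for $j=1,2$, which is contractible.

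By Proposition \ref{prop-pute-boundary-momentangle}, $\ZZ_I \simeq S^{2|I|-1}$. Concretely, $\ZZ_I$ is the union of the products $D^2\times\cdots\times S^1\times\cdots\times D^2$, with the $S^1$ placed in the coordinates $i$ such that $[d]\setminus\{i\}$ is a facet of $I$. Thus it is the subset of $\partial(D^{2d})$ given by the product of those $|I|$ circle boundary pieces with disks in the remaining coordinates. In other words, $\ZZ_I \cong \partial(D^{2|I|}) \times D^{2(d-|I|)} \simeq S^{2|I|-1}$.

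The homotopy pushout of $* \leftarrow S^{2|I|-1} \rightarrow *$ is the unreduced suspension $\Sigma S^{2|I|-1} = S^{2|I|}$. To make this rigorous we use that both maps $\ZZ_I \hookrightarrow \ZZ_{K_j} = D^{2d}$ are inclusions of CW pairs, hence cofibrations with the HEP. The actual pushout, namely two copies of $D^{2d}$ glued along $\ZZ_I$, is therefore a homotopy pushout by Proposition \ref{hompushout-pushout}. Replacing each contractible $D^{2d}$ by a cone on $\ZZ_I$, which is legitimate up to homotopy by Proposition \ref{hep-conse}, identifies $\ZZ_K$ with $\Sigma \ZZ_I \simeq \Sigma S^{2|I|-1} \simeq S^{2|I|}$. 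As a sanity check one can also see it directly: two balls glued along $S^{2|I|-1}\times D^{2(d-|I|)}$ deformation retract onto two $D^{2|I|}$'s glued along their common boundary sphere.

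The main obstacle is justifying that the face poset of the ordered complex $K$ really decomposes as this pushout. The two top simplices have the same vertex set but are distinct elements of the poset, and they share only the faces in $I$. We must check that the union $\bigcup_{\sigma\in K}\ZZ_\sigma$ from \eqref{union-equiv-levi} is correctly modelled. This is subtle because $\ZZ_{K_1}$ and $\ZZ_{K_2}$ are the same subspace of $(D^2)^d$. The polyhedral product must therefore be taken as the homotopy colimit, or colimit, over the poset rather than as a literal union, so that the two copies of $D^{2d}$ are kept distinct and glued only along $\ZZ_I$. We would handle this by working with the colimit over the poset directly. It is the pushout of the two copies of $D^{2d}$ over $\colim_{I}\ZZ_\sigma = \ZZ_I$, and the Projection Lemma \ref{proj-lemma} applies since all the maps are cofibrant inclusions.
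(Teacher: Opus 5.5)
Your proposal is correct and follows the paper's route. You apply Proposition~\ref{induce-pushout} to get a homotopy pushout of two contractible copies of $D^{2d}$ over $\ZZ_I\simeq S^{2|I|-1}$ (Proposition~\ref{prop-pute-boundary-momentangle}), and this is a suspension, hence $S^{2|I|}$. Two of your additions go beyond the paper and are worth keeping: you note that the two copies of $D^{2d}$ must be kept distinct by taking the colimit over the face poset, since a literal union inside $(D^2)^d$ as in \eqref{union-equiv-levi} would identify them, a point the paper's proof passes over; and your explicit model $S^{2|I|}\times D^{2(d-|I|)}$ confirms the answer directly.
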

\begin{proof}
	By hypothesis $K_1,K_2$ and $I$ have no ghost vertices. Therefore, Proposition \ref{induce-pushout} yields the following homotopy pushout of polyhedral products
	\[
	\begin{tikzcd}
		\ZZ_{I}\arrow[r, "\ZZ(i)"] \arrow[d] & \ZZ_{K_2} \simeq * \arrow[d] \\
		\ZZ_{K_1} \simeq * \arrow[r]  & \ZZ_{K},
	\end{tikzcd}
	\]
	where $\ZZ_{K_1} = \ZZ_{K_1} = (D^{2d}) \simeq *$ since both $K_1$ and $K_2$ are copies of a $(d-1)$-simplex $\Delta^{d-1}$. Moreover, Proposition \ref{prop-pute-boundary-momentangle} showed that $\ZZ_{I} \simeq S^{2|I|-1}$, where $|I|$ is the number of facets of $I$.
 	Hence, the homotopy pushout
	\[
	\begin{tikzcd}
		S^{2|I| -1}\arrow[r, "\ZZ(i)"] \arrow[d] &  (D^{2d}) \simeq * \arrow[d] \\
		(D^{2d}) \simeq *  \arrow[r]  & \ZZ_{K},
	\end{tikzcd}
	\]
     implying that $\ZZ_{K} \simeq \Sigma S^{2|I| -1} \simeq S^{2|I|}$.	
\end{proof}

\begin{corollary}\label{coro-wedge}
	Suppose that there is a linear order $K_1, \ldots, K_n$ of $d-1$-dimensional simplices $\Delta^{d-1}$ on the same vertex set $[d]$ such that $(\cup_{i < j} K_i) \cap K_j$ is a pure subcomplex $I_j$ formed by some of the top dimensional simplices of the boundary $K_j$ containing all the vertices $[d]$, for each $j = 1, \ldots, n$. If $K$ is the complex obtained by gluing each $K_i$ via this (shelling) order, then $\ZZ_K$ is homotopic equivalent to a wedge of spheres. 
\end{corollary}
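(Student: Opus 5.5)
Write $L_j := K_1 \cup \cdots \cup K_j$ and $I_j = L_{j-1} \cap K_j$. The proof is an induction on $j$, showing that each $\ZZ_{L_j}$ is homotopy equivalent to a wedge of spheres; the statement is the case $j = n$. The base case is $\ZZ_{L_1} = \ZZ_{K_1} = (D^2)^d \simeq *$, the empty wedge. Since every $K_i$ lives on the same vertex set $[d]$ and every $I_j$ contains all of $[d]$, no ghost vertices appear at any stage. Proposition \ref{induce-pushout} therefore applies to the pushout $L_j = L_{j-1} \cup_{I_j} K_j$ and gives a homotopy pushout
\[
\ZZ_{L_j} \simeq \ZZ_{L_{j-1}} \cup_{\ZZ_{I_j}} \ZZ_{K_j}.
\]

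Next I would identify the two corners. We have $\ZZ_{K_j} = (D^2)^d \simeq *$. By Proposition \ref{prop-pute-boundary-momentangle}, $\ZZ_{I_j} \simeq S^{2|I_j|-1}$, because $I_j$ is a pure codimension-one subcomplex of $\partial K_j$ containing all vertices. The inclusion $\ZZ_{I_j} \to \ZZ_{K_j}$ is a cofibration by Theorem \ref{cofibration-homotopy-colimit}, so the homotopy pushout is the mapping cone of $\ZZ(\iota_j)\colon \ZZ_{I_j} \to \ZZ_{L_{j-1}}$. This gives
\[
\ZZ_{L_j} \simeq \ZZ_{L_{j-1}} \cup_{\ZZ(\iota_j)} C S^{2|I_j|-1} \simeq \ZZ_{L_{j-1}} \cup_{\ZZ(\iota_j)} D^{2|I_j|}.
\]
By Proposition \ref{hep-conse}, its homotopy type depends only on the homotopy class of the attaching map. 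If that map is null-homotopic, then $\ZZ_{L_j} \simeq \ZZ_{L_{j-1}} \vee S^{2|I_j|}$, and the induction closes. This is exactly the mechanism of Proposition \ref{push-glue}, where $L_{j-1}$ was itself a simplex.

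The main obstacle is showing that $\ZZ(\iota_j)\colon S^{2|I_j|-1} \to \ZZ_{L_{j-1}}$ is null-homotopic. I would first try to factor it through a contractible space: $I_j \subseteq K_{i}$ for some earlier simplex $K_i$, so $\ZZ_{I_j} \to \ZZ_{K_i} \simeq * \to \ZZ_{L_{j-1}}$. This works immediately if $I_j$ lies in a single earlier facet, but in general its facets come from different $K_i$'s, so that alone fails.

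The fallback is a connectivity argument. The sphere $S^{2|I_j|-1}$ is the union $\bigcup_{\sigma} \ZZ_\sigma$ over the facets $\sigma$ of $I_j$. Each piece $\ZZ_\sigma$ lands in the contractible $\ZZ_{K_{i(\sigma)}}$ of an earlier simplex containing $\sigma$. I would use these contractions to build a null-homotopy facet by facet, compatibly on overlaps. Alternatively, I would show by induction that $\ZZ_{L_{j-1}}$ is a wedge of spheres of dimension $\ge 4$ and is simply connected, and then argue via homology, using Proposition \ref{add-co-inj}-type counts, that the attaching map is trivial in $H_{2|I_j|-1}$. The catch is that a homologically trivial map into a wedge of spheres need not be null (for instance Whitehead products), so the homological route does not by itself finish the argument. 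I therefore expect the explicit factorization-and-gluing null-homotopy to be where the real work lies.
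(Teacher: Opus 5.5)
You have located the crux correctly, but the step you leave open cannot be closed. The map $\ZZ(\iota_j)\colon \ZZ_{I_j}\simeq S^{2|I_j|-1}\to \ZZ_{L_{j-1}}$ is not null-homotopic in general, and the corollary is false as stated.

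Take $d=4$ and $K_1=1234$, $K_2=1243$, $K_3=2134$, $K_4=2143$. This is the subcomplex of $\Gamma_4$ consisting of the words $uv$ with $u$ a word in $\{1,2\}$ and $v$ a word in $\{3,4\}$. Its face poset is $\Gamma_2\times\Gamma_2$, so $K$ is the join of two copies of $\Gamma_2$. A direct check gives:
\begin{itemize}
\item $I_2$ is generated by $123,124$;
\item $I_3$ is generated by $134,234$;
\item $I_4=\partial K_4$.
\end{itemize}
All three are pure of codimension one and contain every vertex, so the hypotheses hold. But polyhedral products send joins on disjoint vertex sets to products, so $\ZZ_K\simeq \ZZ_{\Gamma_2}\times\ZZ_{\Gamma_2}\simeq S^4\times S^4$. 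This space has a nonzero cup product $H^4\otimes H^4\to H^8$, hence is not a wedge of spheres. Its Betti numbers coincide with those of $S^4\vee S^4\vee S^8$, which is why no homology count can detect the problem. Since $w\mapsto w|_{\{1,2\}}\,w|_{\{3,4\}}$ retracts $\Gamma_4$ onto this $K$, $S^4\times S^4$ is even a retract of $\ZZ_{\Gamma_n}$ for $n\ge 4$.

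Running your induction on this example shows exactly where it fails. First, $\ZZ_{L_2}\simeq D^4\times S^4\simeq S^4$. Next, $\ZZ_{L_3}\simeq D^4\times S^4\cup S^4\times D^4\simeq S^4\vee S^4$, because the attaching map $S^3\to S^4$ is null; this sits inside $S^4\times S^4$ as a thickening of the axes. At the last step, $\ZZ_{I_4}=S^7$ is the union of $S^3\times D^4$ (from the facets $143,243$ of $K_2$) and $D^4\times S^3$ (from the facets $213,214$ of $K_3$). Each half is null in $\ZZ_{L_3}$ through a contractible $\ZZ_{K_i}$, but the two null-homotopies cannot be matched on the overlap $S^3\times S^3$. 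The attaching map $S^7\to S^4\vee S^4$ is the Whitehead product $[\iota_1,\iota_2]$, precisely the obstruction you anticipated for both of your fallback strategies.

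The paper's own proof does not get past this point either. It argues that $\ZZ_{I_n}\to \ZZ_{K_n}\simeq *$ is null because its target is contractible, but that is the other leg of the pushout. The homotopy type of $\ZZ_{W_n}\cup_{\ZZ_{I_n}}\ZZ_{K_n}$ is governed by the leg into $\ZZ_{W_n}$, which is your $\ZZ(\iota_j)$.

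What your argument does prove is that $\ZZ_K$ is homotopy equivalent to a CW complex with one $0$-cell and one $2|I_j|$-cell for each $j\ge 2$, which recovers the additive cohomology. It also shows $\ZZ_K$ is a wedge of spheres under extra conditions, for instance when each $I_j$ lies in a single earlier facet, as in your factorization argument.
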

\begin{proof}
	Denote $(\cup_{i < n} K_i)$ by $W_n$ and $W_n \cap K_n$ by $I_n$. Since, for each $j$, $I_j$ is pure and of codimension $1$ containing all the vertices of the simplex, $I_n$ is also a pure codimension 1 subcomplex. Hence, Proposition \ref{prop-pute-boundary-momentangle} implies that $\ZZ_{I_n} \simeq S^{2|I_n|-1}$, where $|I_n|$ represents the number of facets of $I_n$. Thus, $\ZZ_K \simeq \ZZ_{W_n} \vee S^{2|I_n|}$. Indeed, we have that $\ZZ_{K_n} = D^{2d}$ and Proposition \ref{induce-pushout} implies that the diagram
	\[
	\begin{tikzcd}
		S^{2|I_n| -1}\arrow[r, "\ZZ(i)"] \arrow[d] &  (D^{2d}) \simeq (D^{2|I_n|}) \arrow[d] \\
		\ZZ_{W_n} \arrow[r]  & \ZZ_{K}.
	\end{tikzcd}
	\]
	is homotopy pushout. Following the terminology described in the first paragraph of this section, we say that $(D^{2|I_n|})$ is attached to $\ZZ_{W_n}$ via the map $\ZZ(i): \ZZ_{I} \to (D^{2|I_n|})$. Moreover, the attaching map $\ZZ(i): \ZZ_{I} \to (D^{2|I_n|})$ is null-homotopic, because its target is contractible. This implies that $\ZZ_{K} \simeq \ZZ_{W_n} \vee S^{2|I_n|}$, since we are attaching $(D^{2|I_n|})$ along its boundary. Iterating this procedure for $j = n-1, \ldots, 2$ completes the proof.
\end{proof}

We can now completely determine the homotopy type of $\ZZ_{\Gamma_n}$ by combining the preceding results.
\begin{theorem}\label{homo-type-inj-moment-angle}
\[
\ZZ_{\Gamma_n} \simeq \bigvee_k h_k S^{2k} \text{ with } n\geq 2 \text{ and }  2 \leq k \leq n,
\]
where $h_k$ is the $k$th element of the $h$-vector of the complex of injective words and $h_k S^{2k}$ denotes a wedge sum of $h_k$ spheres of dimension $2k$.
\end{theorem}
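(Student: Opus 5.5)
The plan is to run the lexicographic shelling of Theorem~\ref{theo-shell-inj} through the homotopy pushouts of Proposition~\ref{induce-pushout}. Then Corollary~\ref{coro-wedge}, suitably sharpened, gives a wedge of spheres, and Proposition~\ref{add-co-inj} identifies how many spheres occur in each dimension. Concretely, let $F_1\le_l F_2\le_l\cdots\le_l F_{n!}$ be the facets of $\inj[n]$, that is, all permutations of $[n]$ viewed as $(n-1)$-simplices on the common vertex set $[n]$. Put $W_j=F_1\cup\cdots\cup F_{j-1}$ and $I_j=W_j\cap F_j$, which is a pure codimension-one subcomplex of $\partial F_j$ by shellability. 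Every simplex of $\inj[n]$ lies in some $F_j$, so $\Gamma_n=W_{n!+1}$, and each step $W_{j+1}=W_j\cup_{I_j}F_j$ is a pushout of ordered simplicial complexes. All complexes involved are regarded on the vertex set $[n]$, adding ghost vertices where needed.

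Step one: compute $\ZZ_{I_j}$. Suppose $I_j$ consists of $r_j$ facets of $\partial F_j$, say those opposite the vertices in $S_j\subseteq[n]$ with $|S_j|=r_j$. The argument of Proposition~\ref{prop-pute-boundary-momentangle} splits off the $n-r_j$ coordinates outside $S_j$ as disc factors. This gives $\ZZ_{I_j}\cong D^{2(n-r_j)}\times\ZZ_{\partial\Delta^{r_j-1}}\simeq S^{2r_j-1}$. The formula also covers $r_j=1$, where $I_j$ misses a vertex and $\ZZ_{I_j}\simeq S^1$; in that case the requirement in Corollary~\ref{coro-wedge} that $I_j$ contain all vertices is not needed. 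Since $\ZZ_{F_j}=D^{2n}$ is contractible, Proposition~\ref{induce-pushout} gives a homotopy cofibre sequence
\[
S^{2r_j-1}\xrightarrow{\ \iota_j\ }\ZZ_{W_j}\to\ZZ_{W_{j+1}}.
\]
Step two: show that each $\iota_j$ is null-homotopic. Then $\ZZ_{W_{j+1}}\simeq\ZZ_{W_j}\vee S^{2r_j}$, and by induction starting from $\ZZ_{W_2}=D^{2n}$ we get $\ZZ_{\Gamma_n}\simeq\bigvee_{j\ge 2}S^{2r_j}$.

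Step three: count the spheres. The wedge decomposition gives $H^{2k}(\ZZ_{\Gamma_n})\cong\Z^{\#\{j: r_j=k\}}$, and Proposition~\ref{add-co-inj} gives rank $h_k=d(k)\binom nk$ in degree $2k$ and zero elsewhere. Hence exactly $h_k$ spheres of dimension $2k$ occur. In particular $r_j=1$ never survives, since $h_1=d(1)n=0$. A direct combinatorial check is also available: $r_j=k$ should correspond to the set $S_j$ of positions carrying a derangement pattern in the shelling of \cite{lex-wachs-bjoern}, but the cohomological count makes that check unnecessary.

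The main obstacle is step two. Contractibility of $\ZZ_{F_j}$ alone, as invoked in Corollary~\ref{coro-wedge}, only identifies $\ZZ_{W_{j+1}}$ with the cofibre of $\iota_j$; it does not make $\iota_j$ trivial inside $\ZZ_{W_j}$. I would first try to factor $\iota_j$ through a contractible subspace. Each facet of $I_j$ already lies in an earlier $F_i$, and the lexicographic order should let one choose a single earlier facet $F_i\supseteq I_j$ when $r_j$ is small, so that $\iota_j$ factors through $\ZZ_{F_i}=D^{2n}$. Failing that, I would argue inductively and cohomologically. By induction $\ZZ_{W_j}$ is a wedge of even-dimensional spheres, so $[S^{2r_j-1},\ZZ_{W_j}]$ is detected by Whitehead products and higher homotopy. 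I would then show $\iota_j$ is zero by checking that the long exact sequence of the cofibration, combined with the final Betti numbers from Proposition~\ref{add-co-inj}, forces $\iota_j$ to vanish in homology. The remaining gap, from homology-trivial to null-homotopic, is where real input is needed, for instance a naturality argument using the retraction of polyhedral products onto full subposets $\Gamma_{n,a}$.
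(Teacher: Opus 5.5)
Your Steps one and three are fine, and your diagnosis in Step two is exactly right. The paper's proof is this same shelling argument passed through Corollary~\ref{coro-wedge}. That corollary deduces $\ZZ_{W_{j+1}}\simeq\ZZ_{W_j}\vee S^{2r_j}$ from the null-homotopy of $\ZZ_{I_j}\to\ZZ_{F_j}=D^{2n}$, which, as you say, only identifies $\ZZ_{W_{j+1}}$ with the cofibre of $\iota_j\colon S^{2r_j-1}\to\ZZ_{W_j}$. The gap, however, cannot be closed, for three reasons:
\begin{itemize}
\item Your cohomological fallback carries no information. Inductively $\ZZ_{W_j}\simeq\ZZ_{W_{j-1}}\cup_{\iota_{j-1}}e^{2r_{j-1}}$ has cells only in even dimensions, so every $\iota_j$ is automatically zero in homology.
\item Factoring through a single earlier facet is impossible once $I_j=\partial F_j$ and $n\ge 3$. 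A permutation is determined by its $(n-1)$-letter subwords, so no earlier facet can contain all of them.
\item Most seriously, for $n\ge 4$ some $\iota_j$ is essential, and the statement itself is false.
\end{itemize}

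Here is the obstruction for $n=4$. Let $J\subseteq\Gamma_4$ be the set of words in which every letter of $\{1,2\}$ precedes every letter of $\{3,4\}$. It is closed under subwords, and $w\mapsto(w|_{\{1,2\}},w|_{\{3,4\}})$ is an isomorphism of $J$ onto the face poset of the join $\Gamma_2*\Gamma_2$ on the vertex sets $\{1,2\}$ and $\{3,4\}$. The same formula defines a poset map $r\colon\Gamma_4\to\Gamma_2*\Gamma_2$ with $V(r(w))=V(w)$. Hence $r$ induces a map of polyhedral products, and it restricts to that isomorphism on $J$. Consequently $\ZZ_{\Gamma_2*\Gamma_2}=\ZZ_{\Gamma_2}\times\ZZ_{\Gamma_2}\simeq S^4\times S^4$ is a retract of $\ZZ_{\Gamma_4}$. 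Let $x,y$ be the degree-$4$ generators of $H^*(S^4\times S^4)$. Then $r^*x\cup r^*y$ pulls back along $\ZZ_J\to\ZZ_{\Gamma_4}$ to $x\cup y\neq 0$. So $\ZZ_{\Gamma_4}$ has a nonzero cup product $H^4\otimes H^4\to H^8$, which no wedge of spheres has.

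You can also see this algebraically in L\"u--Panov's Tor-algebra. There, $v_{12}v_{34}\in\Z[\Gamma_4]/(v_1,\dots,v_4)$ is the sum of the six shuffles of $12$ and $34$. Take the functional given by the word coefficients of $(12-21)(34-43)+(34-43)(12-21)$. It kills every $v_iv_u$ but takes the value $2$ on $v_{12}v_{34}$.

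Via the retraction onto the full subposet on $\{1,2,3,4\}$, the same failure occurs for every $n\ge 4$. So at least one attaching map $S^7\to\ZZ_{W_j}$ is essential. It is of the kind (for instance a Whitehead product $S^7\to S^4\vee S^4$) that neither contractibility of $\ZZ_{F_j}$ nor Betti-number counting can detect. The shelling argument, yours or the paper's, proves only the additive statement: $\ZZ_{\Gamma_n}$ has a cell structure with one $0$-cell and $h_k$ cells of dimension $2k$.
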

\begin{proof}
The complex of injective words $\Gamma_n$ is pure and shellable (recall Theorem \ref{theo-shell-inj}) whose facets are $(n-1)$-dimensional simplexes $\Delta^{n-1}$. Therefore, Corollary \ref{coro-wedge} implies that it is a wedge of spheres. From the additive structure of its cohomology, given by Proposition \ref{add-co-inj}, the desired result follows. 
\end{proof}

\section{A homotopy fibration of polyhedral products over ordered simplicial complexes}\label{sec-fibration}

From the calculation of the homotopy type of $\ZZ_{\Gamma_n}$, we construct a new homotopy fibration of polyhedral products. The motivation comes from following lemma:

\begin{lemma}\label{injection-ord-inj}
	Let $K$ be the face poset of an ordered simplicial complex and $\Gamma_n$ be the face poset of the complex of injective words, both on $[n]$ vertices. 
	Then, there is an injective map of posets $i: K \to \Gamma_n$.
\end{lemma}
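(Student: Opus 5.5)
The map is the tautological one. By definition, a simplex of the ordered simplicial complex $K$ on $[n]$ is a finite ordered subset $(v_0,\ldots,v_k)$ of $[n]$. Since it is an ordered subset, its entries are pairwise distinct. Such a tuple is therefore an injective word on $[n]$, i.e.\ a $k$-simplex of $\inj[n]$. I would define
\[
i\colon K \to \Gamma_n,\qquad (v_0,\ldots,v_k)\mapsto (v_0,\ldots,v_k),
\qquad \emptyset\mapsto\emptyset,
\]
so $i$ sends each simplex to the injective word with the same letters in the same order.

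Three things need to be checked.

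First, $i$ is well defined. The vertices of $K$ are its singletons $(v)$, which lie in $[n]$, so every simplex of $K$ uses letters from $[n]$ without repetition. Vertices of $K$ go to vertices of $\Gamma_n$, and simplices of $K$ lie in the vertex set $[n]$ of $\Gamma_n$, so no ghost-vertex issue arises.

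Second, $i$ is injective. Two simplices of $K$ are equal exactly when they are equal as ordered tuples, and $i$ is the identity on tuples.

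Third, $i$ preserves order. In $K$, $\sigma\le\tau$ means $\sigma$ is an ordered subset of $\tau$, i.e.\ $\sigma$ is obtained from $\tau$ by deleting letters while keeping the induced order. This is precisely the face relation in $\inj[n]$ from the Definition of the complex of injective words. Hence $\sigma\le\tau$ in $K$ implies $i(\sigma)\le i(\tau)$ in $\Gamma_n$. The converse also holds, so $i$ is in fact an embedding of $K$ as a subposet of $\Gamma_n$.

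The one step needing care is whether "ordered subset" in the definition of an ordered simplicial complex really forbids repeated letters. If it did not, tuples like $(1,1)$ would have no image in $\Gamma_n$. The definition says the simplices are subsets of $[n]$ carrying an order, so repetition is excluded, and I would state this explicitly. The same check applies to the motivating directed flag complexes: their digraphs have no loops, so directed cliques consist of distinct vertices. In particular $i$ lands in $\Gamma_n$ and is a map of simplicial posets.
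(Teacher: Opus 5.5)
Your proposal is correct and follows the same approach as the paper: identify the vertices of $K$ with $[n]$ and send each ordered simplex to the identical injective word in $\Gamma_n$. Your explicit checks of well-definedness (no repeated letters), injectivity, and preservation of the face relation spell out details the paper leaves implicit, and your observation that the map is in fact an order embedding is a valid strengthening.
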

\begin{proof}
Recall that an ordered simplicial complex is nothing but a collection of ordered sets closed under taking subsets. Denote the vertex set of $K$ by  $\{v_i\}_{i=1}^n$. Fix an order on the vertices $\{v_i\}_{i=1}^n$ of $K$ and identify them with those of $\Gamma_n$. 

Let $\sigma$ be any $(k-1)$-simplex of $K$. Write $\sigma = (v_{i_1}, \ldots, v_{i_k})$; this ordered tuple determines a permutation of the natural order on $k$ positive integers. Therefore, the same ordered tuple defines a $(k-1)$-simplex of $\Gamma_n$, since $\Gamma_n$ contains all length $k$ permutations as $(k-1)$-simplices. We therefore obtain an assigment 
\[
f: K \to \Gamma_n, \qquad f(\sigma) = \sigma,
\]
sending each simplex of $K$ to the associated simplex of $\Gamma_n$ and, this defines the desired map. In other words, after an identification of vertices of $K$ with those of $\Gamma_n$, the vertex bijection extends to a map of ordered simplicial complexes.
\end{proof}

Therefore, the injective map of posets $i: K \to \Gamma_n$ of Lemma \ref{injection-ord-inj} induces the following map of polyhedral products:
\[
\ZZ_K\textbf{(X,A)} \to \ZZ_{\Gamma_n}\textbf{(X,A)}.
\]

Fix $\textbf{(X,A)} := \{(\C P^\infty,*) | i \in V_{\Gamma_n}\}$ and let $K$ be an ordered simplicial complex on the same vertex set $[n]$. In this section, we adopt the explicit notation $\ZZ_K(\C P^\infty,*)$ for the Davis-Januszkiewicz space and $\ZZ_K(D^2,S^1)$ for the moment-angle complex in this section, as it makes the application of Puppe's Theorem \ref{puppe-theo} more transparent. We want to understand the homotopy fibre $F_K$ of 
\[
\ZZ_K(\C P^\infty,*)\to \ZZ_{\Gamma_n}(\C P^\infty,*).
\]

We can construct the following commutative diagram 
\[
\begin{tikzcd}
	F_K  \arrow[r] \arrow[d,swap,"\simeq"] & \ZZ_K(D^2,S^1) \arrow[d] \arrow[r] & \ZZ_{\Gamma_n}(D^2,S^1) \arrow[d]\\
    F_K \arrow[d] \arrow[r] & \ZZ_K(\C P^\infty,*) \arrow[d]\arrow[r] &  \ZZ_{\Gamma_n}(\C P^\infty,*) \arrow[d,]\\ 
	* \arrow[r] & (\C P^\infty)^{n} \arrow[r] & (\C P^\infty)^{n},
\end{tikzcd}
\]
where the vertical and horizontal lines are homotopy fibrations. The second and third vertical fibrations are constructed analogously to the homotopy fibration \ref{folding-homotopy-fibration}. To determine the homotopy type of $F_K$, it therefore suffices to the study the homotopy fibration 
\[
F_K \to \ZZ_K(D^2,S^1)\to \ZZ_{\Gamma_n}(D^2,S^1)
\]
up to homotopy. 

For instance, if $K = \Delta^{n-1}$, the diagram above can be rewritten as follows
\[
\begin{tikzcd}
    F_K  \arrow[r] \arrow[d,swap,"\simeq"] & * \arrow[d] \arrow[r] & \ZZ_{\Gamma_n}(D^2,S^1) \arrow[d]\\
    F_K \arrow[d] \arrow[r] & (\C P^\infty)^{n} \arrow[d,swap,"\simeq"]\arrow[r] &  \ZZ_{\Gamma_n}(\C P^\infty,*) \arrow[d  ]\\ 
    * \arrow[r] & (\C P^\infty)^{n} \arrow[r] & (\C P^\infty)^{n}.
\end{tikzcd}
\]
Hence, $F_K \simeq \Omega \ZZ_{\Gamma_n}(D^2,S^1)$. In particular, for $n=2$, Theorem \ref{homo-type-inj-moment-angle} yields 
$\ZZ_{\Gamma_2}(D^2,S^1) \simeq S^4$, giving the homotopy fibration
\[
\Omega S^4 \to (\C P^\infty)^{\times 2} \to \ZZ_{\Gamma_2}(\C P^\infty,*).
\]

In order to understand the situation when $K$ is an arbitrary ordered simplicial complex, we need to recall some well known facts. 

\begin{lemma}\cite[Lemma~2.2.10]{cubical-homotopy}\label{lemma-above-1}
	Let $f = *: X \to Y$ be the constant map, that is, $x \mapsto *$ for all $x$ in $X$. Then, the homotopy fibre $F_f$ is homotopic to $X \times \Omega Y$.
\end{lemma}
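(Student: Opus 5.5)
The plan is to identify the homotopy fibre $F_f$ with the standard path-space model
\[
F_f = \{(x,\gamma) \in X \times Y^{I} : \gamma(0) = *,\ \gamma(1) = f(x)\}.
\]
When $f$ is literally the constant map at the base point $*$, the condition $\gamma(1)=f(x)$ no longer depends on $x$. It simply says $\gamma(1)=*$. So the defining condition separates into a condition on $x$ (none) and a condition on $\gamma$ (a loop at $*$), and $F_f$ should be homeomorphic to $X \times \Omega Y$, which is stronger than the claimed homotopy equivalence.

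The steps, in order, are as follows.

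\textbf{Step 1.} Recall the construction of the homotopy fibre as the pullback of the path fibration $PY \to Y$, with $PY = \{\gamma \in Y^I : \gamma(0)=*\}$ and $\gamma \mapsto \gamma(1)$, along $f$. Equivalently, it is the fibre over $*$ of the mapping path space replacement $E_f \to Y$.

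\textbf{Step 2.} Substitute $f(x)=*$. This gives $F_f=\{(x,\gamma): \gamma\in\Omega Y\}$.

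\textbf{Step 3.} Check that the map $(x,\gamma)\mapsto (x,\gamma)$ from $F_f$ to $X\times \Omega Y$ is a homeomorphism. Both spaces carry the subspace topology inside $X\times Y^I$; one should take the compactly generated topology throughout, as the paper works in compactly generated spaces.

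If instead $f$ is only assumed null-homotopic, I would add a further step. Homotopic maps have homotopy equivalent homotopy fibres, which follows from the homotopy invariance of pullbacks of fibrations. This reduces that case to the constant case.

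The main subtlety is a point-set one rather than a homotopical one. One must make sure that the chosen model of $F_f$ (for example the one in \cite{cubical-homotopy}) agrees with the path-space model, and that $Y$ being well-pointed makes this model the correct homotopy fibre. Both points are standard.
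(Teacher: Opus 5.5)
Your proof is correct and is the standard direct argument. The paper gives no proof of its own and simply cites \cite[Lemma~2.2.10]{cubical-homotopy}: for $f$ constant at the base point, the condition $\gamma(1)=f(x)=*$ no longer involves $x$, so $F_f\cong X\times\Omega Y$. Your extra step for merely null-homotopic $f$ is what the paper tacitly uses when it combines this lemma with Corollary~\ref{lemma-above-2} to get Corollary~\ref{coro-fibration}; the step works because homotopic maps have fibre homotopy equivalent mapping path spaces. One small correction: you do not need $Y$ to be well-pointed, because $PY\to Y$, $\gamma\mapsto\gamma(1)$, is a Hurewicz fibration for any based space, so the path-space model is always a valid homotopy fibre.
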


\begin{corollary}\label{lemma-above-2}\cite[Corollary~4.9]{Hatcher}
Any map $S^i \to S^n$, where $0 < i < n$ is null homotopic, that is, homotopic to the constant map.
\end{corollary}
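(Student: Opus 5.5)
The statement is the cited fact \cite[Corollary~4.9]{Hatcher}: every map $S^i \to S^n$ with $0 < i < n$ is null-homotopic. I would prove it by cellular approximation.

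First, give $S^i$ and $S^n$ their minimal CW structures: one $0$-cell and one top cell each, namely $S^i = e^0 \cup e^i$ and $S^n = e^0 \cup e^n$. The $i$-skeleton of $S^n$ is then the single point $e^0$, because $i < n$ and there are no cells of dimension strictly between $0$ and $n$.

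Next, let $f\colon S^i \to S^n$ be any continuous map. By the cellular approximation theorem, $f$ is homotopic to a cellular map $g$, meaning $g$ sends the $k$-skeleton of $S^i$ into the $k$-skeleton of $S^n$ for every $k$. Since $S^i$ is its own $i$-skeleton, $g(S^i)$ lies in the $i$-skeleton of $S^n$, which is the point $e^0$. So $g$ is constant and $f \simeq g$ is null-homotopic.

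For a based version, first rotate $S^n$ so that $f$ preserves basepoints; this is harmless because $S^n$ is path connected. Then apply the relative form of cellular approximation, which keeps the homotopy fixed on the $0$-cell, where $f$ is already cellular. The same argument shows the class of $f$ in $\pi_i(S^n)$ is trivial.

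The hypothesis $0<i$ is not really used: the case $i=0$ also works because $S^n$ is path connected.

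The only substantive input is the cellular approximation theorem itself. Its proof perturbs $f$ cell by cell so that it misses a point in the interior of each cell of too high dimension, and then pushes radially off that cell. If one wanted to avoid CW machinery entirely, I would instead use simplicial approximation. Triangulate $S^i$ and $S^n = \partial\Delta^{n+1}$, and subdivide $S^i$ until $f$ admits a simplicial approximation $g$. The image of $g$ is a union of simplices of dimension at most $i < n$, so it misses a point $p$ of $S^n$. Then $g$ factors through $S^n \setminus \{p\} \cong \mathbb{R}^n$, which is contractible. Hence $g$, and therefore $f$, is null-homotopic.
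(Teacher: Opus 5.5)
Your proof is correct and follows the same route as the source. The paper gives no argument of its own and only cites Hatcher's Corollary~4.9, which Hatcher proves exactly as you do: put the minimal CW structures on $S^i$ and $S^n$, homotope the (based) map to a cellular one rel the basepoint, and note it must be constant because the $i$-skeleton of $S^n$ is a point when $i<n$. Your alternative via simplicial approximation (the image misses a point, so the map factors through $S^n\setminus\{p\}\cong\mathbb{R}^n$) is also valid.
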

\begin{corollary}\label{coro-fibration}
	The homotopy fiber of a map $f: S^k \to S^n$ is homotopy equivalent to $S^k \times \Omega S^n$, where $1 \leq k < n$.
\end{corollary}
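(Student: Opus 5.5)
The plan is to combine the two preceding results. Corollary \ref{lemma-above-2} says that $f$ is null-homotopic, and Lemma \ref{lemma-above-1} identifies the homotopy fibre of a constant map. What is left is to check that the homotopy fibre depends only on the homotopy class of the map.

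\emph{Step 1 (replace $f$ by the constant map).} Since $1 \leq k < n$, Corollary \ref{lemma-above-2} applies to $f: S^k \to S^n$ and gives a homotopy $H: S^k \times I \to S^n$ from $f$ to the constant map $*$ at the base point. Note that the range $k \geq 1$ is not needed here; the hypothesis only matters through $k<n$.

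\emph{Step 2 (homotopic maps have equivalent homotopy fibres).} Write the homotopy fibre as
\[
F_g = \{(x,\gamma) \in S^k \times (S^n)^{I} : \gamma(0) = *,\ \gamma(1) = g(x)\}.
\]
Using $H$, I define maps $\Phi: F_f \to F_*$ and $\Psi: F_* \to F_f$:
\[
\Phi(x,\gamma) = \bigl(x,\ \gamma \cdot H(x,-)\bigr), \qquad \Psi(x,\gamma) = \bigl(x,\ \gamma \cdot \overline{H(x,-)}\bigr).
\]
Here $\cdot$ denotes concatenation of paths and the bar denotes the reversed path. Both composites $\Psi\Phi$ and $\Phi\Psi$ append a path together with its inverse. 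Contracting $\beta\cdot\bar\beta$ to the constant path in the standard, continuous-in-parameters way gives homotopies to the respective identities. Alternatively, one can cite the general fact that the homotopy pullback of $* \to S^n \leftarrow S^k$ is homotopy invariant in the map.

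\emph{Step 3 (conclude).} Lemma \ref{lemma-above-1} gives $F_* \simeq S^k \times \Omega S^n$. Hence $F_f \simeq S^k \times \Omega S^n$.

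The main obstacle is the continuity and well-definedness in Step 2. In particular, the constant loop at the base point must be compatible with the conventions behind Lemma \ref{lemma-above-1}. I would handle this by working with Moore paths, or simply by citing homotopy invariance of homotopy pullbacks from \cite{cubical-homotopy}.
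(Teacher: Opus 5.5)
Your proposal is correct and takes the same route as the paper. The paper's one-line proof also combines the null-homotopy from Corollary~\ref{lemma-above-2} with the constant-map case in Lemma~\ref{lemma-above-1}; your Step~2 makes explicit the homotopy invariance of the homotopy fibre, which the paper leaves implicit.
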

\begin{proof}
	It follows immediately from Lemma \ref{lemma-above-1} and Corollary \ref{lemma-above-2}.  
\end{proof}

\begin{theorem}\label{mark-suggestion-new-fibration}
	There is a homotopy fibration sequence
	\[
    \ZZ_K(\Omega(\bigvee_k h_k S^{2k})\times D^2, \Omega(\bigvee_k h_k S^{2k})\times S^1 )	\to \ZZ_K(\C P^\infty,*)\to \ZZ_{\Gamma_n}(\C P^\infty,*),
	\]
	where $K$ and $\Gamma_n$ denote, respectively, the face poset of an ordered simplicial complex and the face poset of the complex of injective words, both on $[n]$ vertices with $n \geq 2$, and $h_k$ is the $k$th entry of the $h$-vector of $\Gamma_n$.
\end{theorem}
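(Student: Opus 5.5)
The plan is to apply Puppe's Theorem \ref{puppe-theo} to the $K$-diagram of maps
\[
\ZZ_\sigma(\C P^\infty,*) \to \ZZ_{\Gamma_n}(\C P^\infty,*), \qquad \sigma \in K,
\]
which are obtained from the inclusion $i: K \to \Gamma_n$ of Lemma \ref{injection-ord-inj} together with the inclusion of each $\ZZ_\sigma$ into the union. Since $\hocolim_{\sigma\in K}\ZZ_\sigma(\C P^\infty,*) = \ZZ_K(\C P^\infty,*)$, Puppe's theorem identifies the homotopy fibre of $\ZZ_K(\C P^\infty,*)\to\ZZ_{\Gamma_n}(\C P^\infty,*)$ with $\hocolim_{\sigma\in K} F_\sigma$. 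Here $F_\sigma$ is the homotopy fibre of the $\sigma$-th map. So the work splits into two steps: identify each $F_\sigma$ naturally in $\sigma$, and then recognise the homotopy colimit as the stated polyhedral product.

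For the first step, write $W = \bigvee_k h_k S^{2k}$, so that $W \simeq \ZZ_{\Gamma_n}(D^2,S^1)$ by Theorem \ref{homo-type-inj-moment-angle}. I would reuse the $3\times 3$ diagram displayed above, with $K$ replaced by the single face poset $\PP_{\le\sigma}$. The homotopy fibre of $\ZZ_\sigma(\C P^\infty,*) = \prod_{i\in V(\sigma)}\C P^\infty \to (\C P^\infty)^n$ is $\ZZ_\sigma(D^2,S^1) \simeq \prod_{i\notin V(\sigma)} S^1$. Comparing fibres over $(\C P^\infty)^n$, exactly as in the discussion preceding the theorem, identifies $F_\sigma$ with the homotopy fibre of the inclusion
\[
\ZZ_\sigma(D^2,S^1) \hookrightarrow \ZZ_{\Gamma_n}(D^2,S^1) \simeq W .
\]
This inclusion is null-homotopic. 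Indeed, $\sigma$ lies in some facet $\tau$ of $\Gamma_n$, so $\ZZ_\sigma(D^2,S^1) \subseteq \ZZ_\tau(D^2,S^1) = (D^2)^n$, and the latter is contractible. Lemma \ref{lemma-above-1} then gives
\[
F_\sigma \simeq \ZZ_\sigma(D^2,S^1)\times \Omega W .
\]

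For the second step, I would rewrite $\ZZ_\sigma(D^2,S^1)\times\Omega W$ as the polyhedral-product piece $\ZZ_\sigma(\Omega W\times D^2, \Omega W\times S^1)$. This means absorbing the $\Omega W$ factor into the pairs, with $\Omega W$ understood diagonally, that is, one common loop factor; that is how I would read the notation in the statement. Taking $\hocolim_{\sigma\in K}$ then gives the left-hand term of the fibration. Puppe's theorem is applied in a form where all fibrations lie over the fixed base $B=\ZZ_{\Gamma_n}(\C P^\infty,*)$, which is exactly our situation.

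The main obstacle is naturality. The equivalences $F_\sigma \simeq \ZZ_\sigma(D^2,S^1)\times\Omega W$ must assemble into a map of $K$-diagrams, and the null-homotopies above depend on the choice of facet $\tau \geq \sigma$, which is not unique in $\Gamma_n$. My first attempt would be to fix one global null-homotopy of the composite $\ZZ_K(D^2,S^1)\to W$ and restrict it to each $\ZZ_\sigma$; this automatically gives compatible trivialisations, so the pieces $F_\sigma$ form a diagram that is objectwise equivalent to $\sigma\mapsto \ZZ_\sigma\times\Omega W$. If no such global null-homotopy is available, I would fall back on homotopy invariance of homotopy colimits under objectwise equivalences. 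In that case one must check by hand that the induced maps $F_\sigma\to F_{\sigma'}$ agree up to coherent homotopy with the inclusions $\ZZ_\sigma\times\Omega W \to \ZZ_{\sigma'}\times\Omega W$.
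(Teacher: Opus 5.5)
The step you flag as the main obstacle cannot be carried out in general, because the statement itself fails for some $K$.

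\textbf{Counterexample.} Take $K=\Gamma_n$ (already for $n=2$: two vertices joined by both edges $12$ and $21$). Then $\ZZ_K(\C P^\infty,*)\to\ZZ_{\Gamma_n}(\C P^\infty,*)$ is the identity, so its homotopy fibre is contractible. Your candidate $\ZZ_{\Gamma_n}(D^2,S^1)\times\Omega W$ is not contractible, and neither is $(\Omega W)^n\times\ZZ_{\Gamma_n}(D^2,S^1)$, which is the literal reading of the notation. Indeed $h_2=d(2)\binom{n}{2}=\binom{n}{2}\geq 1$, so $H^4(W)\neq 0$.

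\textbf{Where coherence breaks.} You can see this directly for $n=2$. The vertices $\{1\}$ and $\{2\}$ lie below both facets $12$ and $21$. The null-homotopies of the two contractible pieces $D^2\times D^2$ indexed by $12$ and $21$ are essentially unique. They restrict on $\ZZ_{\{1\}}\cup\ZZ_{\{2\}}=\partial(D^2\times D^2)\cong S^3$ to the contractions through the two hemispheres of $\ZZ_{\Gamma_2}=D^4\cup_{S^3}D^4\cong S^4$. These two contractions differ by a generator of $\pi_4(S^4)$, so no compatible choice exists.

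Consequently there is no global null-homotopy of $\ZZ_K(D^2,S^1)\to W$ in general. Your fallback also fails. Coherent homotopies making $\sigma\mapsto F_\sigma$ equivalent to $\sigma\mapsto \ZZ_\sigma(D^2,S^1)\times\Omega W$ would give $F_K\simeq\ZZ_K(D^2,S^1)\times\Omega W$, which the example refutes.

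\textbf{The paper's proof.} It does not supply the missing idea either. For each $\sigma$ it builds a fibration over $W$ from the coordinatewise \emph{constant} maps $S^1\to W$, then applies Theorem~\ref{puppe-theo} to this family. In effect it replaces the induced map $\ZZ_K(D^2,S^1)\to\ZZ_{\Gamma_n}(D^2,S^1)$ by a constant map, which is exactly the coherence you could not establish. You are right that the notation must be read with a single $\Omega W$ factor; the paper's own examples ($K=\Delta^{n-1}$, and $K$ two points) require this.

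\textbf{What your argument does prove.} It proves the statement under the extra hypothesis that $\ZZ_K(D^2,S^1)\to\ZZ_{\Gamma_n}(D^2,S^1)$ is null-homotopic. For example, this holds when every simplex of $K$ is ordered compatibly with one fixed total order of $[n]$. Then $K$ lies in a single facet of $\Gamma_n$, and the map factors through $\ZZ_{\Delta^{n-1}}(D^2,S^1)\simeq (D^2)^n\simeq *$. In that case Puppe's theorem is unnecessary. Comparing fibres over $(\C P^\infty)^n$ identifies the fibre with that of $\ZZ_K(D^2,S^1)\to W$, and Lemma~\ref{lemma-above-1} gives $\ZZ_K(D^2,S^1)\times\Omega W$ directly.
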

\begin{proof}
	Recall the following diagram 
	\[
	\begin{tikzcd}
	F_K  \arrow[r] \arrow[d,swap,"\simeq"] & \ZZ_K(D^2,S^1) \arrow[d] \arrow[r] & \ZZ_{\Gamma_n}(D^2,S^1) \arrow[d]\\
	F_K \arrow[d] \arrow[r] & \ZZ_K(\C P^\infty,*) \arrow[d]\arrow[r] &  \ZZ_{\Gamma_n}(\C P^\infty,*) \arrow[d]\\ 
	* \arrow[r] & (\C P^\infty)^{\times n} \arrow[r] & (\C P^\infty)^{\times n},
	\end{tikzcd}
	\]
	Recall Theorem \ref{homo-type-inj-moment-angle} establishes $\ZZ_{\Gamma_n}(D^2,S^1) \simeq \bigvee_k h_k S^{2k}$, where $h_k$ denotes the $k$th entry of the $h$-vector of $\Gamma_n$.
	
	Fix an element $\sigma \in K$. Lemma \ref{lemma-above-1} and Corollary \ref{lemma-above-2} yield the homotopy fibration sequence
	\begin{equation}\label{htpy-fibration1}
	\Omega(\bigvee_k h_k S^{2k})\times S^1 \to S^1 \to \bigvee_k h_k S^{2k}. 
	\end{equation}
	  There is also a homotopy fibration 
    \begin{equation}\label{htpy-fibration2}
    \Omega(\bigvee_k h_k S^{2k})\times D^2 \to D^2 \to \bigvee_k h_k S^{2k}
    \end{equation}
    and \ref{htpy-fibration1} maps to \ref{htpy-fibration2} via including $S^1$ into $D^2$. By Puppe's theorem \ref{puppe-theo}, this yields a homotopy fibration 
	\[
	\ZZ_\sigma(\Omega(\bigvee_k h_k S^{2k}) \times D^2, \Omega(\bigvee_k h_k S^{2k})\times S^1 ) \to	\ZZ_\sigma(D^2,S^1)\to \bigvee h_k S^{2k}.
	\]
	Since this construction applies to any $\sigma \in K$, we obtain a family of homotopy fibrations indexed by elements of $K$:
	\[
	\{\ZZ_\sigma(\Omega(\bigvee h_k S^{2k})\times D^2, \Omega(\bigvee h_k S^{2k})\times S^1 ) \to	\ZZ_\sigma(D^2,S^1)\to \bigvee h_k S^{2k}\}_{\sigma \in K}.
	\]
	  Applying Puppe's Theorem \ref{puppe-theo} again, this family induces a homotopy fibration 
    \[\ZZ_K(\Omega(\bigvee h_k S^{2k})\times D^2, \Omega(\bigvee h_k S^{2k})\times S^1 ) \to \ZZ_K(D^2,S^1)\to \bigvee h_k S^{2k}
    \]
    which completes the proof.
\end{proof}

For example, let $K$ be a disjoint union of two vertices. We have the following diagram of homotopy fibrations:
		\[
	\begin{tikzcd}
		F_K  \arrow[r] \arrow[d,swap,"\simeq"] & \ZZ_{K}(D^2,S^1) \simeq S^3 \arrow[d] \arrow[r] & \ZZ_{\Gamma_2}(D^2,S^1) \simeq S^4 \arrow[d]\\
		F_K \arrow[d] \arrow[r] &  \ZZ_{K}(\C P^\infty,*) \simeq \C P^\infty \vee \C P^\infty \arrow[d,swap,]\arrow[r] &  \ZZ_{\Gamma_2}(\C P^\infty,*) \arrow[d,]\\ 
		* \arrow[r] & (\C P^\infty)^{\times 2} \arrow[r] & (\C P^\infty)^{\times 2}.
	\end{tikzcd}
	\]
By Theorem \ref{mark-suggestion-new-fibration} and Corollary \ref{coro-fibration}, we conclude that $\ZZ_K(\Omega S^4 \times D^2, \Omega S^4 \times S^1) \simeq \Omega S^4 \times S^3$.

\section*{Acknowledgments} 
 This paper is based in part on a chapter of the PhD thesis of the author \cite{pedro_phd}. The author thanks Ran Levi for his support and insightful discussions throughout this work, and the anonymous referee for helpful comments that improved the exposition of this article.

\printbibliography

\end{document}

%% file: header.tex
\usepackage[margin=25mm]{geometry}
\usepackage{graphicx,tikz}
\usepackage{tikz-cd}
\usetikzlibrary{calc, arrows.meta} 
\usepackage{multirow}
\usepackage{subcaption}
\usepackage[table]{xcolor}
\usepackage{xcolor}
\usepackage{csquotes} 
\usepackage[
  backend=biber,
  style=numeric-comp,
  sorting=nty,
  natbib=true,
  maxbibnames=10,
  uniquelist=false,
  uniquename=false
]{biblatex}
\usepackage{fancyhdr}

\bibliography{polyinj}
 \AtEveryCitekey{\clearlist{location}}
 \AtEveryBibitem{\clearlist{location}}
\makeatletter
\newcommand\footnoteref[1]{\protected@xdef\@thefnmark{\ref{#1}}\@footnotemark}
\makeatother

\usepackage[algo2e,vlined,ruled,linesnumbered,nofillcomment]{algorithm2e} 
\SetKwIF{If}{ElseIf}{Else}{if}{}{else if}{else}{end if}%
\SetKwFor{While}{while}{}{end while}%
\SetKwFor{For}{for}{}{end for}%
\DontPrintSemicolon

\usepackage{listings}
\lstdefinestyle{snippets}{
    backgroundcolor=\color{bg},   
    commentstyle=\color{ggreen},
    keywordstyle=\color{gred},
    numberstyle=\tiny\color{gray},
    stringstyle=\color{gblue},
    identifierstyle=\color{gblue},
    basicstyle=\ttfamily\footnotesize,
    breakatwhitespace=false,         
    breaklines=true,                 
    captionpos=b,                    
    keepspaces=true,                 
    numbers=left,                    
    numbersep=5pt,                  
    showspaces=false,                
    showstringspaces=false,
    showtabs=false,                  
    tabsize=2,
    xleftmargin=3mm,
    xrightmargin=3mm,
    framexleftmargin=3mm,
    framexrightmargin=3mm,
    framextopmargin=0mm,
    framexbottommargin=0mm,
    frame=tlbr,framesep=5pt,framerule=0pt
}
\usepackage{sectsty}
\sectionfont{\fontsize{12}{15}\selectfont\centering}

\usepackage{amsmath,amssymb}
\usepackage{colonequals}

\usepackage{hyperref} 
\usepackage[noabbrev,capitalize]{cleveref}

\usepackage{amsthm,thmtools}
\newtheorem{theorem}{Theorem}[section]
\newtheorem{proposition}[theorem]{Proposition}
\newtheorem{corollary}[theorem]{Corollary}
\newtheorem{lemma}[theorem]{Lemma}

\theoremstyle{definition}
\newtheorem{definition}[theorem]{Definition}

\newtheorem{alphabeticaltheorem}{Theorem}[theorem] 

\newcommand{\inj}{\mathbf{Inj}}

\newcommand{\id}{\text{id}}

\DeclareMathOperator*{\hocolim}{hocolim}
\DeclareMathOperator*{\colim}{colim}
\newcommand{\Top}{\textbf{Top}}

\newcommand{\Z}{\mathbb{Z}}

\newcommand{\C}{\mathbb{C}}

\newcommand{\DD}{\mathcal{D}}

\newcommand{\II}{\mathcal{I}}
\newcommand{\PP}{\mathcal{P}}
\newcommand{\ZZ}{\mathcal{Z}}

\crefname{definition}{Definition}{Definitions}

\AtEndDocument{
  \par
  \medskip
  \noindent
  \begin{tabular}{p{\textwidth}}%
    Institute of Computer Science, Dependable Systems, Kiel University, Kiel, 24118, Germany\\
    \emph{Email address}: \texttt{prdc@informatik.uni-kiel.de}
  \end{tabular}}

